\newtheorem{theorem}{Theorem}[section]
\newtheorem{lemma}[theorem]{Lemma}
\newtheorem{corollary}[theorem]{Corollary}
\theoremstyle{definition}
\newtheorem{definition}[theorem]{Definition}
\theoremstyle{remark}
\numberwithin{figure}{section}
\numberwithin{table}{section}
\newcommand{\ra}{\rightarrow}
\newcommand{\hra}{\hookrightarrow}
\providecommand{\abs}[1]{\left\lvert#1\right\rvert}
\providecommand{\len}[1]{\lVert#1\rVert}
\providecommand{\norm}[1]{\lVert#1\rVert}
\newcommand\mc{\mathcal}
\newcommand{\Teich}{Teichm\"uller }
\newcommand{\Poin}{Poincar\'e }
\def\ra{{\rightarrow}}
\def\bZ{{\mathbb Z}}
\def\bR{{\mathbb R}}
\def\S1g{{\Sigma_{g,1}}}
\def\T1g{{\mathcal T}_{g,1}}
\def\M1g{{MC}_{g,1}}
\def\I1g{{\mathcal I}_{g,1}}
\newcommand\Diff{\operatorname{Diff}}
\begin{document}

\title[Mapping Class Factorization]
{Mapping Class Factorization via Fatgraph Nielsen Reduction 
}

\author{Alex James Bene}
\address{Department of Mathematics\\
University of Southern California\\
Los Angeles, CA 90089\\
USA\\}
\email{bene{\char'100}math.usc.edu}

\keywords{mapping class groups, Nielsen reduction,   fatgraphs, ribbon graphs, chord diagrams, Ptolemy groupoid,  }
\subjclass{Primary 20F38, 05C25; Secondary   20F34, 57M99,  32G15, 20F05, 20F06,  20F99}

\thanks{The author would like to thank Robert Penner for helpful discussions and feedback on earlier versions of this paper.}

\begin{abstract}

The mapping class group of a genus $g$ surface $\Sigma_{g,1}$ with one boundary component is known to have a simple yet infinite presentation with generators given by elementary moves called  Whitehead moves on so-called marked bordered fatgraphs.  
In this paper, we introduce  an algorithm  called ``fatgraph Nielsen reduction'' which, from the action of a mapping class  $\varphi\in MC_{g,1}$ of  $\Sigma_{g,1}$  on the fundamental group $\pi_1(\Sigma_{g,1})$ of $\Sigma_{g,1}$, determines a sequence of Whitehead moves representing $\varphi$ beginning at any choice of marked bordered fatgraph.   As a consequence, this leads to an algorithm which factors any mapping class  given by its action on $\pi(\Sigma_{g,1})$  in terms of a certain generating set  for $MC_{g,1}$.  
\end{abstract}

\maketitle

\section{Introduction}

The combinatorial description of the mapping class group of a surface $\Sigma$ in terms of ideal triangulations of $\Sigma$ has a long history, going back to  Whitehead who proved that any two ideal triangulations are related by a sequence of elementary  ``diagonal exchange'' moves.  Later, inspired by the geometric insights of Thurston and Mumford, this combinatorial description became more firmly established in the hyperbolic setting by Penner's decorated \Teich space \cite{penner} and in the conformal setting by Harer's utilization of Strebel's results  on quadratic differentials \cite{harer,strebel}.  In these settings, the \Poin dual viewpoint gained  prevalence, where diagonal exchanges on ideal triangulations gave way to  elementary moves called Whitehead moves on marked fatgraphs:  vertex-oriented graphs  embedded in $\Sigma$ as a spine.  In particular, any mapping class of $\Sigma$ can be represented by a sequence of Whithead moves, and this sequence is unique up to certain well-known relations.

 The mapping class group $MC_{g,1}$ of a surface $\Sigma_{g,1}$ with one boundary component has  particularly nice properties. 
 Algebraically, it is classically known that $MC_{g,1}$ is a subgroup of the automorphism of a free group via its action on the fundamental group of $\Sigma_{g,1}$.  Combinatorially, the mapping class group $MC_{g,1}$  admits another related  combinatorial description in    terms of elementary moves called chord slides on a special type of \emph{linear}  fatgraph \cite{abp,bene} which have  coincidentally been studied extensively  in other contexts under the name of \emph{linear chord diagrams}.  We will choose to use  this later terminology in this paper.  Moreover, there exists a precise chordslide--Whitehead move correspondence which relates these two types of elementary moves.  
      This ``linear" variation of the theme appears to  have  some advantages.  
 In particular,  every marked linear chord diagram canonically determines a set of generators for the free group $\pi:=\pi_1(\Sigma_{g,1})$ of $\Sigma_{g,}$.  We call a generating set arising in this fashion a combinatorial generating set, or simply  CG set.  Each  CG  set satisfies certain constraints dictated by the form of the linear chord diagram from which it arises, a special case of which is the well known condition that a standard set of symplectic generators $\{\alpha_i,\beta_i\}$ satisfies the  relation $\prod_{i=1}^{g} [\alpha_i,\beta_i]=\partial\Sigma_{g,1}$.

While every mapping class $\varphi\in MC_{g,1}$ has a description in terms of elementary moves on fatgraphs, algorithms currently available for determining such a sequence have the disadvantage that they depend on  resolving  intersections of arcs or closed curves in the surface $\Sigma_{g,1}$ \cite{mosher,Penner03}.  For many reasons,  it would be desirable to construct such a sequence from purely algebraic information, such as the action of $\varphi$ on the fundamental group $\pi$ of $\Sigma_{g,1}$.  In this paper, we present just such an  algorithm,  
 which we call   \emph{fatgraph Nielsen reduction}.  
 
   To define this algorithm, we introduce an \emph{energy  function} $\len{\cdot}$ on  CG  sets which is  derived from a lexicographical ordering on  $\pi$  extending the usual word length function with respect to some set of generators for $\pi$.   In fact, each marked linear chord diagram $G\hra \Sigma_{g,1}$, which can be considered as a choice of ``basepoint in decorated \Teich space'' (see Section \ref{sect:fatgraphs} or \cite{abp}), determines its own energy function.  By the correspondence  between marked linear chord diagrams and CG sets, we can equivalently  consider this  energy $\len{\cdot}$  as a function on marked linear chord diagrams, and the main result of this paper can be stated as follows:

\begin{theorem}\label{thm:fmain}
Let $\len{\cdot}$ denote the energy function with respect to a fixed ``basepoint''   $G\hra \Sigma_{g,1}$.  Given any  marked linear chord diagram $G_0$, there exists a sequence of energy  decreasing chord slides
\[
G_0\ra G_1\ra \dotsm \ra G_k, \quad \norm{G_i} < \norm{G_{i-1}}
\]
with $G_k=G$.
\end{theorem}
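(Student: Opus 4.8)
The plan is to run a descent argument in direct analogy with classical Nielsen reduction, with the energy $\norm{\cdot}$ playing the role of total word length. First I would observe that, by construction, $\norm{\cdot}$ takes values in a well-ordered set: each CG set is a tuple of elements of $\pi$, the lexicographical ordering extending word length is a well-order on $\pi$ (elements of bounded length being finite in number), and the lexicographic comparison of finitely many such coordinates is again a well-order. Consequently any strictly energy-decreasing sequence of chord slides terminates after finitely many steps. The theorem therefore reduces to two assertions: (i) the basepoint $G$ is the unique global minimizer of $\norm{\cdot}$ among all marked linear chord diagrams, and (ii) whenever $G_0 \neq G$, some chord slide strictly decreases the energy. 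Granting these, repeatedly applying an energy-decreasing slide starting from $G_0$ produces a strictly decreasing sequence which, by well-ordering, terminates; by (ii) it can terminate only at the minimizer, which by (i) is $G$.

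For (i), I would identify $\norm{G}$ with the energy of the \emph{standard} CG set determined by the basepoint, in which each generator is a single generator of $\pi$ and hence has word length one. Since the chord-diagram constraints force any CG set to be a genuine generating set subject to the boundary relation $\prod_{i=1}^{g}[\alpha_i,\beta_i] = \partial\Sigma_{g,1}$, no CG set can have total length smaller than that of the standard one; the bijection between CG sets and marked linear chord diagrams then promotes minimality to uniqueness of the minimizer.

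The crux is (ii), the one-step reduction lemma, and this is where I expect the real work to lie. The idea is the Nielsen one: if $G_0 \neq G$ then its CG set is non-standard, so at least one generator $w$ is a reduced word of length $>1$ and hence begins or ends with a syllable that cancels against a neighboring generator. In a free group one would simply replace $w$ by the shorter product; here, however, only chord slides are permitted, so the essential point is to show that the combinatorics of the linear chord diagram always supplies a chord slide realizing exactly such a cancellation. I would argue this by a case analysis on the local configuration of chords around the longest generator, using the CG constraints to exclude the degenerate possibility that a long generator persists while no reducing slide is available. The lexicographical refinement of the length function is what guarantees a \emph{strict} decrease even in the borderline cases where a slide trades two generators of equal length yet reorders the tuple into a lexicographically smaller one; thus every non-minimal $G_0$ admits a strictly energy-decreasing chord slide.

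The main obstacle, then, is precisely the translation between the abstract freedom of Nielsen transformations and the restricted vocabulary of chord slides: one must verify that the reducing transformation demanded by the Nielsen argument is always among the allowable moves dictated by the fatgraph structure. I would tackle this by exploiting the explicit chordslide--Whitehead move correspondence together with the constraints characterizing CG sets, thereby reducing the verification to a finite check on local chord configurations.
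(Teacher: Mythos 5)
Your top-level framing is sound and matches the paper's: energy is integer-valued (indeed the paper's $\len{\cdot}$ extends word length and gives a well-ordering), so everything reduces to showing that every marked linear chord diagram $G_0\neq G$ admits a chord slide strictly decreasing $\len{\cdot}$; the descent then terminates, and it can only terminate at $G$. (Your step (i) is dispensable once (ii) is known, and as stated it has a flaw anyway: you argue length-minimality of the basepoint and then assert energy-minimality, and you invoke the symplectic relation $\prod[\alpha_i,\beta_i]=\partial\Sigma_{g,1}$ even though CG sets need not be symplectic. But none of this matters, since (ii) alone pins down the terminal diagram.)

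The genuine gap is that (ii), which is the entire mathematical content of the theorem, is left as a plan, and the plan as described would not succeed. You propose a ``case analysis on the local configuration of chords around the longest generator,'' reduced to ``a finite check on local chord configurations.'' The obstruction configurations are not local and not finite in this sense: whether any slide decreases $\len{\cdot}$ depends on arbitrarily long words and on a global cancellation scheme for the boundary word $\prod_{j=1}^{4g}c_j=\overline{\partial\Sigma_{g,1}}$. The paper's proof fixes such a scheme $c_j=L_jW_jR_j$ and runs a trichotomy: (a) some slide reduces length (Lemma \ref{lem:halflength}); (b) no slide reduces length but some $W_j$ is empty, in which case a counting/contradiction argument over the \emph{whole} scheme (Lemma \ref{lem:emptyW}) produces a balanced chord end, whose slide reduces energy by the lexicographic tie-break (Lemmas \ref{lem:plq}, \ref{lem:balancedreduces}) --- this is the one case your ``equal-length trade'' remark anticipates; but also (c) no $W_j$ is empty, which forces $W_j=\sigma_j$ for every $j$ with $\bigl\lvert\abs{L_j}-\abs{R_j}\bigr\rvert\le 1$ (Lemma \ref{lem:w}). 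In case (c) no slide reduces length and no end need be balanced (odd-length ends cannot be), so your dichotomy (length reduction, or lexicographic trade at a balanced end) misses it entirely; the paper instead locates the minimal $j$ with $\abs{R_j}\neq 0$ and shows the rightward slide of $c_j$ reduces energy, which requires the second assertion of Lemma \ref{lem:w} --- that if $c_j=\sigma_j$ then $\sigma_j$ occurs in no other chord end --- proved by following a chain of chord ends around the boundary cycle of the diagram, a global argument with no local substitute. Note also that your opening claim in (ii), that a generator of length $>1$ ``begins or ends with a syllable that cancels against a neighboring generator'' in a way a slide can exploit, is exactly what fails in cases (b) and (c): cancellation exists in aggregate, but at each end it may be exactly half the length or less on both sides, so no single slide realizes a Nielsen-style shortening.
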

The proof of this theorem 
 relies on the combinatorics  of linear chord diagrams and elementary cancellation theory. 

Since there is a canonical sequence of Whitehead moves which ``linearizes'' a given bordered fatgraph (given by the greedy algorithm of \cite{abp}) and every chord slide of a linear chord diagram  can be described in terms of Whitehead moves, we obtain the following
\begin{corollary}
There exists an explicit algorithm which from the action of a mapping class $\varphi\in MC_{g,1}$ on $\pi$ determines a sequence of Whitehead moves  representing $\varphi$,  beginning at any ``basepoint'' marked bordered fatgraph $G\hra \Sigma_{g,1}$.  
\end{corollary}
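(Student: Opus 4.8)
The plan is to obtain the corollary by assembling Theorem \ref{thm:fmain} with the two facts recalled just before its statement: that the greedy algorithm of \cite{abp} linearizes any bordered fatgraph through an explicit sequence of Whitehead moves, and that every chord slide is itself expressible as a sequence of Whitehead moves. First I would fix the input: the basepoint marked bordered fatgraph $G\hra\Sigma_{g,1}$ and the mapping class $\varphi\in MC_{g,1}$, presented through its action on $\pi$. Running the greedy algorithm on $G$ produces a canonical Whitehead sequence $\sigma\colon G\to L$, where $L$ is a marked linear chord diagram; this $L$ will serve as the ``basepoint'' linear chord diagram to which the energy function of Theorem \ref{thm:fmain} is attached, and it determines a CG set $S\subset\pi$.

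Next I would translate the algebraic input into a starting linear chord diagram. Since a marked linear chord diagram is recorded by its CG set and $\varphi$ is known as an automorphism of $\pi$, the diagram $G_0:=\varphi^{-1}\cdot L$ has the well-defined CG set $\varphi^{-1}(S)$, which is computable from the action of $\varphi$ on $\pi$. Applying Theorem \ref{thm:fmain} to $G_0$ (with $L$ playing the role of the basepoint there) yields an energy-decreasing chord slide sequence
\[
G_0\ra G_1\ra\dotsm\ra G_k=L,\qquad \norm{G_i}<\norm{G_{i-1}},
\]
and replacing each chord slide by its Whitehead realization gives an explicit Whitehead sequence $\tau$ from $G_0$ to $L$. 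The strict energy decrease guarantees finiteness, so $\tau$ is effectively produced.

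Finally I would reconcile the basepoint and direction. Because $L=\varphi\cdot G_0$, the sequence $\tau$ runs from $G_0$ to $\varphi\cdot G_0$ and so represents $\varphi$. As the $MC_{g,1}$-action commutes with Whitehead moves, the same underlying combinatorial sequence, started instead at $L$, runs from $L$ to $\varphi\cdot L$. I would therefore output the concatenation of three blocks of Whitehead moves: the linearization $\sigma\colon G\to L$, the sequence $\tau$ read from $L$ to $\varphi\cdot L$, and the reverse linearization $\bar\sigma\colon\varphi\cdot L\to\varphi\cdot G$. This composite is a single explicit Whitehead sequence running from $G$ to $\varphi\cdot G$; since a morphism of the Ptolemy groupoid between two marked fatgraphs is pinned down by its endpoints (a loop at $G$ would represent the identity by freeness of the action), and these endpoints are $G$ and $\varphi\cdot G$, the sequence represents precisely $\varphi$.

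The step I expect to demand the most care is not any single construction but the bookkeeping certifying that the assembled path represents exactly $\varphi$, rather than $\varphi^{\pm1}$ composed with a residual marking change. One must check that the relabeling defining $G_0=\varphi^{-1}\cdot L$, the groupoid-to-mapping-class dictionary, and the conjugation by $\sigma$ all use compatible orientation and composition conventions, and that $\varphi^{-1}(S)$ genuinely realizes a marked linear chord diagram of the type to which Theorem \ref{thm:fmain} applies. Everything else---termination, computability, and the passage between chord slides and Whitehead moves---is supplied verbatim by Theorem \ref{thm:fmain} and the cited correspondences.
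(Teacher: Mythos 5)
Your proposal is correct and follows essentially the same route the paper intends for this corollary: linearize the basepoint fatgraph via the greedy algorithm of \cite{abp}, apply Theorem \ref{thm:fmain} to flow back to the basepoint linear chord diagram, expand each chord slide into its pair of Whitehead moves, and conjugate by the linearization path---your equivariance and endpoint bookkeeping merely fills in details the paper leaves implicit. The one wrinkle is your choice to start the reduction at $\varphi^{-1}\cdot L$, which forces you to compute $\varphi^{-1}$ from $\varphi$ (algorithmically possible, e.g.\ by classical Nielsen reduction, but an avoidable extra step); the paper's arrangement in Theorem \ref{thm:fmain2} instead runs the energy reduction from $G_{\varphi(\mc{S})}$ down to $G_{\mc{S}}$, and since every chord slide is invertible by a chord slide one simply reverses that sequence, so no inversion of $\varphi$ is ever needed.
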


Moreover, since there is a map from Whitehead moves to mapping classes in $MC_{g,1}$ (see \cite{abp}) with finite image $\mathfrak{G}_g$, we  have the following
\begin{corollary}
Given any generating set $\mathfrak{S}$ for $MC_{g,1}$ described in terms of $\mathfrak{G}_g$, there is an algorithm  for decomposing any mapping class $\varphi\in MC_{g,1}$  into a product of generators in $\mathfrak{S}$.
\end{corollary}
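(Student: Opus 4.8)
The plan is to combine the preceding corollary with a finite precomputation that re-expresses the finite set $\mathfrak{G}_g$ in terms of $\mathfrak{S}$. First I would invoke the preceding corollary: from the action of $\varphi$ on $\pi$ it produces a sequence of Whitehead moves representing $\varphi$ based at the fixed fatgraph $G\hra\Sigma_{g,1}$. Composing this sequence with the map from Whitehead moves to $MC_{g,1}$ of \cite{abp} yields an explicit word $\varphi=\gamma_1\gamma_2\cdots\gamma_m$ with each $\gamma_i$ lying in the finite image $\mathfrak{G}_g$. Thus the problem is immediately reduced to the purely finite task of producing, for each of the finitely many elements $\gamma\in\mathfrak{G}_g$, a word in the generators $\mathfrak{S}$ equal to $\gamma$.

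Next I would carry out this reduction once and for all as a precomputation. Since $\mathfrak{S}$ generates $MC_{g,1}$, each $\gamma\in\mathfrak{G}_g$ is expressible as some word in $\mathfrak{S}$, and the task is to locate such a word algorithmically. To do so I would enumerate words in $\mathfrak{S}$ by increasing length in a breadth-first manner, evaluate each such word as an automorphism of $\pi$ (using that $\mathfrak{S}$ is described in terms of $\mathfrak{G}_g$, whose elements act explicitly on $\pi$), and compare it against the target $\gamma$. Because $MC_{g,1}$ acts faithfully on $\pi$, i.e.\ embeds in $\Aut(\pi)$ as recalled in the introduction, equality of two mapping classes is decided simply by checking their induced automorphisms on a generating set of $\pi$. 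As $\mathfrak{S}$ generates and there are only finitely many targets $\gamma\in\mathfrak{G}_g$, this search is guaranteed to terminate, yielding for each $\gamma$ a chosen $\mathfrak{S}$-word $u_\gamma$.

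Finally I would assemble the output: substituting $\gamma_i=u_{\gamma_i}$ into the factorization above expresses $\varphi=u_{\gamma_1}u_{\gamma_2}\cdots u_{\gamma_m}$ as a product of generators in $\mathfrak{S}$, as desired. I expect the only real obstacle to lie in the precomputation step, namely in guaranteeing that the naive enumeration over $\mathfrak{S}$-words terminates; this rests on two facts that must be invoked carefully, the finiteness of $\mathfrak{G}_g$ (so that only finitely many targets must be matched) and the decidability of the word problem in $MC_{g,1}$ via its faithful action on $\pi$. If instead the generating set $\mathfrak{S}$ already comes equipped with expressions of each $\gamma\in\mathfrak{G}_g$ as an $\mathfrak{S}$-word, this precomputation is unnecessary and the substitution step alone completes the argument.
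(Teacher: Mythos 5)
Your proposal is correct and follows essentially the same route as the paper, which also derives $\varphi=\gamma_1\cdots\gamma_m$ with $\gamma_i\in\mathfrak{G}_g$ from the Whitehead-move factorization of Theorem \ref{thm:fmain3} together with the $MC_{g,1}$-equivariant map of \cite{abp}, and then rewrites in $\mathfrak{S}$; the paper leaves this last rewriting step implicit, and your finite breadth-first precomputation (with equality of mapping classes decided via the faithful action on $\pi$, which the paper's Dehn--Nielsen description justifies) correctly fills it in. The only small adjustment: enumerate words over $\mathfrak{S}\cup\mathfrak{S}^{-1}$, since $\mathfrak{S}$ is only assumed to generate $MC_{g,1}$ as a group, so a search over positive $\mathfrak{S}$-words alone need not reach every $\gamma\in\mathfrak{G}_g$.
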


\section{Combinatorial Generating Sets}

Let $\Sigma_{g,1}$ be a genus $g$ surface with one boundary component, and let $\pi$ denote its fundamental group $\pi:=\pi_1(\Sigma_{g,1},p)$ with respect to a basepoint $p\in \partial\Sigma_{g,1}$.  It is well known that $\pi$ is isomorphic to a free group on $2g$ generators, and an explicit isomorphism is equivalent to a choice of an ordered set of generators for $\pi$.  For a given set of $2g$ generators $\mc{X}=\{x_i\}_{i=1}^{2g}$, we define the corresponding set of \emph{letters} $\{x_i, \bar x_i\}_{i=1}^{2g}$ to be the set containing each element $x\in\mc{X}$ and its inverse $\bar x$.  

We  define a \emph{standard symplectic set of generators} for $\pi$ to be one of the form $\mc{S}=\{\alpha_i,\beta_i\}_{i=1}^{g}$ such that the element of  $\pi$ corresponding to the boundary $\partial\Sigma_{g,1}$ is represented  by the word $\partial\Sigma_{g,1}=\prod_{i=1}^g[\alpha_i,\beta_i]$, where the bracket $[x,y]$ denotes the commutator $xy\bar x \bar y$.  Again, we let   $\bar x$ denote the inverse of an element $x$.  Note that  each letter of the generating set  is used once in this product.

More generally, we make the 
\begin{definition}
An ordered set of generators $\mc{X}$ for $\pi$ is a \emph{combinatorial generating set}, or CG set, if the  boundary element  $\partial\Sigma_{g,1}\in \pi$ can be written as a reduced word  using each letter of $\mc{X}$ exactly once.  Two CG sets are equivalent if their corresponding sets of letters are the same. 
\end{definition}
 We remark that not all generating sets are CG sets, and not all CG sets are (equivalent to) standard symplectic generating sets. 
Topologically, an equivalence class of CG sets corresponds to (an isotopy class of)  a collection of $2g$ arcs based at $p$ which decompose the surface $\Sigma_{g,1}$ into a $(4g+1)$-gon, and a CG set itself corresponds to a labeling and choice of orientation for each such arc.  However, we will find it more useful to consider the picture which is \Poin dual to this, which will involve linear chord diagrams.  
 
The mapping class group $MC_{g,1}$ of the surface $\Sigma_{g,1}$ is typically defined as $\pi_0(\Diff(\Sigma_{g,1},\partial\Sigma_{g,1}))$,   the group of components of  the group  of self-diffeomorphisms of $\Sigma_{g,1}$ which fix the boundary pointwise.  By a classical result usually attributed to Dehn and Nielsen (see \cite{Zieschang}),  $MC_{g,1}$ has an equivalent algebraic definition as 
 the group of automorphisms of $\pi$ which fix the word representing the boundary $\partial\Sigma_{g,1}$.  Equivalently, but more in line with our viewpoint, we can define the mapping class group $MC_{g,1}$ in the following way:  mapping classes in $MC_{g,1}$ are exactly the  automorphisms of $\pi$ which take standard symplectic generating sets to standard symplectic generating sets, and  there is a (non-canonical) one-to-one correspondence between (equivalence classes of) standard symplectic generating sets and elements of $MC_{g,1}$.  

We now introduce  the chord slide groupoid, which  is a subgroupoid of the Ptolemy groupoid of $\Sigma_{g,1}$ (see section \ref{sect:fatgraphs} or \cite{bkp}) and  should be thought of as a groupoid ``containing'' the mapping class group $MC_{g,1}$.  Recall that a groupoid can either be described as a set with a partial composition operation with inverses, or as a category all of whose morphisms are isomorphisms.  
\begin{definition}[cf. \ref{thm:bene}]
Fix a CG set $\mc{X}_i$ in each equivalence class, and let $\mathfrak{X}=\{\mc{X}_i\}$ denote the set of all representatives.  
The \emph{chord slide groupoid} is defined as the category whose objects are copies of $\pi$, precisely one  for every $\mc{X}_i\in\mathfrak{X}$, and whose  morphisms are  the homomorphisms $\psi_{i,j}\colon \pi\ra \pi$ which take one CG set to another $\psi_{i,j}\colon \mc{X}_i\mapsto\mc{X}_j$.  
\end{definition}
Note that by the Hopfian property of $\pi$, the $\psi_{i,j}$ are  necessarily isomorphisms.  
Also note that there is a unique morphism between each pair of  objects, so that this groupoid is equivalent to a trivial groupoid (in the category-theoretic sense), and that up to isomorphism, this definition does not depend on the choice of representatives.

\section{Linear chord diagrams}\label{sect:linfatgraphs}

A linear chord diagram  is a combinatorial object  best described as a graph $G$ immersed in the plane:    the \emph{core} of $G$ consists of the (connected subgraph consisting of) edges  of $G$ embedded in  the real line, and  the remaining edges, the \emph{chords} of $G$, are immersed arcs in the upper half plane with their endpoints  attached at distinct points of the core.  We call the attaching points the \emph{chord ends} of $G$, and require that they correspond to integer points of the real line.  
(See \cite{bene} for a more thorough description.)  For a given chord end $v$, we let $\bar v$ denote the opposite end of the chord attached at $v$.

There is a natural linear chord diagram associated to every CG set, but before we describe this association, we first develop some notation concerning  CG sets.  
Given a CG set $\mc{X}=\{x_i\}_{i=1}^{2g}$ for $\pi$, let 
\[
\mc{C}_{\mc{X}}=\{c_j\}_{j=1}^{4g}, \quad c_j=x_{i_j}^{\varepsilon_j}
\]
denote the (unique) ordered  set of letters corresponding to $\mc{X}$, ordered such that 
\[
\prod_{j=1}^{4g} c_j =\prod_{j=1}^{4g} x_{i_j}^{\varepsilon_j}= \overline{\partial \Sigma_{g,1}}
\]
with $\varepsilon_i=\pm 1$.  (The reason for the appearance of $\overline{\partial \Sigma_{g,1}}$ rather than $\partial \Sigma_{g,1}$ in the above formula will be discussed in section \ref{sect:fatgraphs}.)  
 For example, for a standard symplectic CG set $\{\alpha_i,\beta_i\}_{i=1}^{g}$, the corresponding letters are  $c_1=\beta_g$, $c_2=\alpha_g$, $c_3=\bar \beta_g$, etc. 
 Finally, we define integers $\ell_i$ and $r_i$ for $i=1,\ldots, 2g$ by 
\[
c_{\ell_i} =x_i^{\pm 1}, \quad c_{r_i}=x_i^{\pm 1}, \quad \ell_i<r_i.  
\]
Note that the set $\{c_{\ell_i}\}_{i=1}^{2g}$ of ``left ends'' gives a canonical representative for the  equivalence class containing $\mc{X}$, as does the set $\{c_{r_i}\}_{i=1}^{2g}$ of ``right ends''.  

 \begin{figure}[h!]
\input{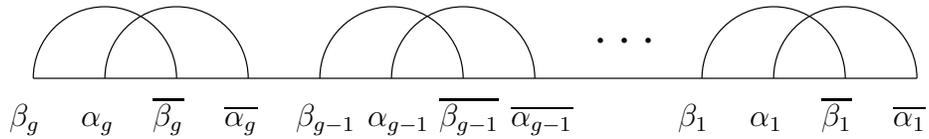}
  \caption{A syplectic CG set and linear chord diagram }\label{fig:symplectic}
 \end{figure}

Given a  CG set $\mc{X}$ for $\pi$, we construct a linear chord diagram, denoted   $G_{\mc{X}}$,  as follows. The core of $G_{\mc{X}}$ is taken to be the interval $[1,4g]$ of the real line, the chords of $G_{\mc{X}}$ correspond to the generators of $\mc{X}$, and the  ends  of a chord corresponding to $x_i\in \mc{X}$ are prescribed  by $\ell_i$ and $r_i$.  Using this correspondence, we will often abuse notation slightly and  refer to the generators $x_i$ as \emph{chords} and the letters $c_j$ as \emph{chord ends}.    See Figure \ref{fig:symplectic} for the linear chord diagram associated to a standard symplectic set of generators.   By the uniqueness of reduced words in $\pi$  (see \cite{abp}), the  linear chord diagram $G_\mc{X}$ is uniquely determined by the CG set $\mc{X}$.   We call a linear chord diagram arising from a CG set for $\pi$ a \emph{genus $g$} chord diagram.   By a \emph{marked linear chord diagram}, we shall mean a linear chord diagram $G_\mc{X}$ together with a labeling of its chord ends by the letters $\mc{C}_{\mc{X}}$  of the CG set $\mc{X}$.  By abuse of notation, we shall denote this marked linear chord diagram also by  $G_{\mc{X}}$.

We now describe the elementary move on linear chord diagrams, the \emph{chord slide}.  Let $v$ be a chord end which is not farthest to the right of a linear chord diagram $G$.  The chord slide 
 of $v$ to the right results in a new linear chord diagram $G'$ obtained by  ``sliding $v$ over''  the chord to its right, thus relocating $v$  to a new position along the core (and isotoping chord ends so that they lie at distinct integer points).   The chord slide to the left 
  is completely analogous. See Figure \ref{fig:sl}.
 
We can extend chord slides so that they act on the set $\mathfrak{X}$ of (representatives of) CG sets in a natural way.  This action is most easily described pictorially by the action of a chord slide on the chord ends associated to a CG set.  There are  four possible types of chord slides, and we illustrate the action of each  in Figure \ref{fig:sl}, where all chord ends not depicted in the figure are understood to be fixed by the chord slide.  While not explicitly needed, we note that the resulting effect on a CG set $\mc{X}$ takes the following form up to permutation, which is a (composition of) Nielsen transformation(s) for the free group $\pi$: 
 \[
 \begin{split}
 x_j& \mapsto x_j^{\pm 1}, \quad  j\neq i \\
 x_i&\mapsto (x_i^{\pm 1} x_k^{\pm 1})^{\pm 1} , \quad \textrm{for some }k\neq i,
 \end{split}
 \]
 where $x_i\in \mc{X}$ is the generator corresponding to the chord which a chord end is slid over, and the exact form depends on the choice of representative CG sets.

 \begin{figure}[h!]
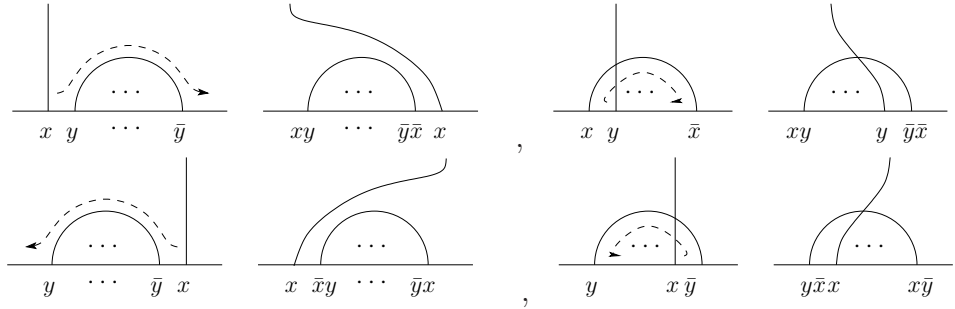

\scalebox{.75}{\input{sl1.pstex_t}}\;\;\; , \;\;\scalebox{.75}{\input{sl2.pstex_t}}\\
\scalebox{.75}{\input{sl3.pstex_t}}\; \;\; , \;\;\scalebox{.75}{\input{sl4.pstex_t}}\\
  \caption{Two  slides to the right and two slides to the left. }\label{fig:sl}
 \end{figure}

Note that under  obvious conditions, chord slides on CG sets can be composed, thus they generate a groupoid.  In fact, by their actions on (equivalence classes of) CG sets, chord slides can be considered as elements (i.e., morphisms) of the chord slide groupoid, and we now recall  the main result of \cite{bene}:

\begin{theorem}[\cite{bene}]\label{thm:bene}
The chord slide groupoid is  generated by chord slides on CG sets, and all relations are compositions of five explicit ones (which we do not list here).  
\end{theorem}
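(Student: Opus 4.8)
The statement is a presentation of a groupoid, so the plan is to treat it as such. Since the Hopfian remark shows there is exactly one morphism between any ordered pair of objects, the chord slide groupoid is \emph{codiscrete}: its vertex groups are trivial. Consequently, if one forms the graph $\Gamma$ whose vertices are the genus $g$ linear chord diagrams (equivalently, the equivalence classes of CG sets) and whose edges are the chord slides, then the free (path) groupoid on $\Gamma$ surjects onto the chord slide groupoid, and the theorem is equivalent to two topological assertions: (i) $\Gamma$ is connected, and (ii) the $2$-complex $K$ obtained by attaching a $2$-cell along each of the five relation words is simply connected. Indeed, for a codiscrete groupoid the kernel of the presenting map is precisely $\pi_1(\Gamma)$ based at any vertex, so a complete set of relations is the same thing as a normally generating set for $\pi_1(\Gamma)$, i.e.\ a set of $2$-cells killing $\pi_1$.

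For part (i), the plan is to import connectivity from the ambient Ptolemy groupoid. By the chord slide--Whitehead correspondence, each chord slide is (a composition of) Whitehead moves, so $\Gamma$ sits inside the fatgraph flip graph, which is connected by Penner's theorem (equivalently, by contractibility of decorated \Teich space of $\Sigma_{g,1}$). The extra point is to stay inside the linear locus: given two CG sets, I would apply the greedy linearization algorithm of \cite{abp} to a connecting sequence of Whitehead moves and verify that the resulting detours can be reorganized entirely as chord slides. A purely combinatorial alternative is a sorting/Nielsen reduction argument directly on the ordered word $\overline{\partial\Sigma_{g,1}}=\prod_j c_j$: chord slides realize the elementary Nielsen transformations of Figure \ref{fig:sl}, and one shows any CG set can be driven to a fixed standard symplectic one, proving connectivity without reference to \Teich theory.

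For part (ii), the cleanest route is the standard one for Ptolemy-type groupoids: realize $K$ as the dual $2$-skeleton of a contractible cell complex. Concretely, I would use the cell decomposition of a based variant of decorated \Teich space adapted to the linear condition, in which the top-dimensional cells are indexed by linear chord diagrams, the codimension-one walls correspond to chord slides, and the codimension-two cells correspond to the local degenerations where two slides interact. One then enumerates these codimension-two configurations --- slides supported on disjoint portions of the core (a far-commutativity cell), slides sharing a chord end, and the ``pentagon-type'' configuration inherited from the flip relation --- and checks that each contributes exactly one of the five listed relations. Contractibility (hence simple connectivity) of the ambient complex then forces $\pi_1(K)=1$, so no further relations are needed. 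Alternatively, one may derive the five relations by pushing Penner's presentation of the Ptolemy groupoid (involution, commutativity, pentagon) through the chord slide--Whitehead correspondence and recording how these restrict and recombine on the linear locus.

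The main obstacle is part (ii), specifically proving that the five relations are \emph{exhaustive}. The difficulty is that linear chord diagrams are not the generic cells of the usual decorated \Teich cell decomposition but a constrained sub-family, so cutting out the correct contractible ``linear'' complex --- and verifying that its codimension-two strata are exactly the five advertised types with nothing omitted --- requires a careful finite local analysis. In particular one must correctly account for the commutativity of chord slides acting on disjoint parts of the core, which is easy to overlook, and confirm that every recombination of Penner's pentagon and involution relations along the linear locus already lies in the normal closure of the five. Establishing this completeness, rather than mere sufficiency of the relations, is where the real work lies.
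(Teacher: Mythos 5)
The paper you are being compared against contains no proof of this statement: Theorem \ref{thm:bene} is imported wholesale from \cite{bene}, so the baseline is the proof there, which derives the presentation by pushing Penner's Ptolemy-groupoid presentation (involutivity, commutativity, pentagon relations, themselves coming from decorated \Teich theory) through the chordslide--Whitehead correspondence, using the greedy algorithm of \cite{abp} to retract Whitehead-move paths onto the linear locus. Your general framework is sound: since the chord slide groupoid has exactly one morphism between any two objects, a presentation is indeed equivalent to (i) connectivity of the slide graph $\Gamma$ and (ii) triviality of $\pi_1$ of the two-complex obtained by attaching the five relation cells, with the kernel of the presenting map identified with $\pi_1(\Gamma)$. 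Your ``purely combinatorial alternative'' for (i) is essentially what the present paper's own Theorem \ref{thm:fmain} accomplishes via fatgraph Nielsen reduction (the paper even remarks that this gives an alternate proof of the subgroupoid statement from \cite{bene}), and your second route for (ii) is the one actually taken in \cite{bene}.

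The genuine gap is in (ii), and you have located it yourself. Your primary proposal --- a contractible cell complex adapted to the linear condition, with top cells indexed by linear chord diagrams and codimension-one walls given by chord slides --- is not constructed and does not exist as described: a chord slide is a composition of \emph{two} Whitehead moves, so in Penner's dual fatgraph complex slides are length-two edge paths, not walls; the top-dimensional cells corresponding to linear chord diagrams form a sparse family whose members generically do not meet each other in codimension one (the cell across any wall of a linear cell is in general non-linear), so no subcomplex with your stated stratification can simply be ``cut out'' of the ambient complex, and its contractibility would be a substantial new theorem rather than an inherited fact. That leaves your fallback --- conjugating the involutivity, commutativity, and pentagon relations by greedy paths and carrying out the finite case analysis showing that every resulting slide relation lies in the normal closure of the five --- which is the correct route, but that case analysis \emph{is} the completeness assertion, and you explicitly defer it. As written, the proposal is a correct strategic outline whose decisive step is missing.
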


The importance of this theorem is that it  yields  a combinatorial presentation of the mapping class group $MC_{g,1}$.  More precisely, the action of the mapping class group $MC_{g,1}$ on $\pi$ extends to an obvious  action on the set $\mathfrak{X}$ of (equivalence classes) of CG sets, and  letting $\varphi(\mc{X})$ denote the image of $\mc{X}$ under $\varphi\in MC_{g,1}$, the theorem states that $\varphi$ can be represented by a  sequence of chord slides 
\[
G_\mc{X}\ra G_{\mc{X}_1}\ra \dotsm \ra G_{\varphi(\mc{X})}
\]
on marked linear chord diagrams, unique up to some known relations.  
The main result of this paper is an algorithm which will determine such a sequence.

\section{The length and energy  functions}

We now introduce the length and energy functions  that we will use to develop our algorithm.  For the rest of the paper, we assume that the genus $g$ is fixed, and we fix a (not necessarily standard symplectic) CG set $\mc{S}=\{s_i\}_{i=1}^{2g}$ for $\pi$ with corresponding letters $\mc{C}_{\mc{S}}=\{\sigma_j\}_{j=1}^{4g}$ so that  $\prod_{j=1}^{4g} \sigma_j = \overline{\partial\Sigma_{g,1}}$.    

Firstly, we recall   the standard word length function with respect to $\mc{S}$, which we denote by $\abs{\cdot}$.  For  an element $w\in \pi$, we define 
$
\abs{w}=k
$
if $w$ is written as a reduced word $w=\prod_{i=1}^k \sigma_{j_i}$ using $k$ (possibly repeating) letters  of $\mc{C}_{\mc{S}}$.  We also set $\abs{id}=0$.     For any CG set $\mc{X}=\{x_i\}$ with corresponding chord ends $\mc{C}=\{c_j\}$, we define the length of $\mc{X}$ by
\[
\abs{\mc{X}}:=2 \sum_{i=1}^{2g}\abs{x_i}=\sum_{j=1}^{4g}\abs{c_j}.
\]
Note that $\abs{\mc{X}}=4g$ if and only if $\mc{X}$ is equivalent to $\mc{S}$ (i.e., $\mc{C}_{\mc{X}}=\mc{C}_{\mc{S}}$) by the uniqueness of reduced words.  We also define the length function of a marked linear chord diagram by $\abs{G_{\mc{X}}}:=\abs{\mc{X}}$.

We shall also require a refinement of the above word length function, which we call the \emph{energy function}.  This energy function is derived from a  lexicographical ordering for $\pi$, and  we begin by defining the value of the energy function  on the letters in $\mc{C}_{\mc{S}}$.  We set
\[
\len{\sigma_j}=j.
\]
We then define the energy  of a reduced word $w=\prod_{i=1}^k \sigma_{j_i} \in \pi$ by 
\[
\len{w}=\sum_i (4g+1)^{i-1} \len{\sigma_{j_i}}.
\]
Note that this function defines a well-ordering on elements of $\pi$.  Moreover, this extends the word length function since  by the definition, we have
\[
(4g+1)^{\abs{w}-1}\leq  \len{w} <(4g+1)^{\abs{w}}
\]
so that 
\[
\abs{w}< \abs{v}, \quad  \textrm{implies} \quad \len{w}<\len{v}.
\]
Finally, for any CG set $\mc{X}=\{x_i\}$ with corresponding chord ends $\mc{C}_{\mc{X}}=\{c_j\}$, we define the energy  of $\mc{X}$ to be
\[
\len{\mc{X}}=\sum_{j=1}^{4g} \len{c_j}.
\]
Similarly, we define the energy  of a marked linear chord diagram $G$ by  $\len{G_{\mc{X}}}:=\len{\mc{X}}$.

As a final remark, we state the easy
\begin{lemma}\label{lem:plq}
If $p,q\in \pi$ with $\len{p}<\len{q}$, and $w\in \pi$ is written as a reduced word $w=qx$ with $\abs{q}\leq\abs{x}$, then 
\[
\len{px}+\len{\bar x \bar p} < \len{qx}+\len{\bar x \bar q}.
\]
\end{lemma}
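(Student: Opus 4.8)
The plan is to read the energy of a reduced concatenation off the positional definition of $\len{\cdot}$, bound the two terms on the left from above, and then compare the two sides coefficient by coefficient, the decisive point being that the gap $\len{q}-\len{p}$ enters amplified by a power of $(4g+1)$ large enough to absorb every error term.

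First I would extract the length information hidden in the hypothesis. Since $\len{\cdot}$ refines $\abs{\cdot}$, the inequality $\len{p}<\len{q}$ already forces $\abs{p}\le\abs{q}$: were $\abs{p}>\abs{q}$, then $\len{p}\ge(4g+1)^{\abs{p}-1}\ge(4g+1)^{\abs{q}}>\len{q}$, a contradiction. Hence $\abs{p}\le\abs{q}\le\abs{x}$, which I will use repeatedly. Next I would record the basic splitting coming from the positional definition: if a product $ab$ of reduced words is itself reduced, then
\[
\len{ab}=\len{b}+(4g+1)^{\abs{b}}\len{a},
\]
while if cancellation occurs one checks directly that the reduced product has strictly smaller energy than this right-hand side, so that $\len{ab}\le\len{b}+(4g+1)^{\abs{b}}\len{a}$ in all cases.

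Applying the splitting to the reduced words $qx$ and $\bar x\bar q$ yields exact values, whereas the same estimate applied to $px$ and $\bar x\bar p$ yields upper bounds. Subtracting, it suffices to prove
\[
(4g+1)^{\abs{x}}\bigl(\len{q}-\len{p}\bigr)+\bigl[(4g+1)^{\abs{q}}-(4g+1)^{\abs{p}}\bigr]\len{\bar x}+\bigl(\len{\bar q}-\len{\bar p}\bigr)>0.
\]
The first factor $\len{q}-\len{p}$ is a positive integer, so the first term is at least $(4g+1)^{\abs{x}}$, and the second term is nonnegative because $\abs{p}\le\abs{q}$.

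The one genuine obstacle is the last term $\len{\bar q}-\len{\bar p}$: the hypothesis controls $p$ and $q$ but says nothing about their inverses, so this term may well be negative. This is exactly where the length constraint $\abs{q}\le\abs{x}$ does its work. From the defining bound $\len{\bar p}<(4g+1)^{\abs{p}}$ together with $\abs{p}\le\abs{x}$ we get $\len{\bar p}<(4g+1)^{\abs{x}}$, hence $\len{\bar q}-\len{\bar p}>-(4g+1)^{\abs{x}}$; this is precisely the power of $(4g+1)$ already guaranteed by the first term, so the negative inverse contribution cannot overcome it and the displayed quantity is strictly positive. I expect this balancing of the uncontrolled inverse term against the amplified prefix gap, enabled by $\abs{q}\le\abs{x}$, to be the heart of the matter; verifying the cancellation inequality and the bookkeeping in the subtraction is routine.
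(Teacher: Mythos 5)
Your proof is correct and takes essentially the same route as the paper: the paper's two displayed estimates, $\len{qx}-\len{px}\geq(4g+1)^{k}$ and $\len{\bar x\bar p}-\len{\bar x\bar q}<(4g+1)^{k}$ with $k=\abs{x}$, are precisely your amplified prefix gap and your bound on the uncontrolled inverse-side term, with your write-up additionally making explicit the cancellation inequality and the deduction $\abs{p}\leq\abs{q}$ that the paper leaves tacit. One small point worth noting: your splitting $\len{ab}=\len{b}+(4g+1)^{\abs{b}}\len{a}$ reads the energy with the \emph{first} letter most significant, which is the mirror of the paper's literal formula $\len{w}=\sum_i(4g+1)^{i-1}\len{\sigma_{j_i}}$ but is the interpretation under which the lemma is actually true and which the paper's own proof implicitly uses.
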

\begin{proof}
Letting $k=\abs{x}$, we have
\[
\len{\bar x \bar p}-\len{\bar x \bar q} <(4g+1)^k,  \quad \textrm{ while }  \quad \len{qx}-\len{px}\geq (4g+1)^k,
\]
and the result follows.
\end{proof}

\section{Some Cancellation Theory}

Recall that we have fixed a ``basepoint'' CG set $\mc{S}=\{s_i\}$ with corresponding letters $\{\sigma_j\}_{j=1}^{4g}$ and  length and energy functions  $\abs{\cdot}$ and $\norm{\cdot}$.  

Now, consider an arbitrary CG set $\mc{X}=\{x_i\}_{i=1}^{2g}$ with corresponding chord ends $\mc{C}=\{c_j\}_{j=1}^{4g}$ and linear chord diagram $G_{\mc{X}}$.  
For a given chord end $c_j\in \mc{C}$ with $j>1$, let $\ell(c_j)$  denote the amount of cancellation of $c_j$ on the left, meaning the number of letters that get cancelled when reducing the word $c_{j-1}c_j$.  More precisely,
\begin{equation}\label{eq:ell}
\ell(c_j)=\frac12\left(\abs{c_{j-1}}+\abs{c_j}-\abs{c_{j-1}c_j}\right).
\end{equation}
Similarly, define $r(c_j)$ to be the number of cancellations on the right, so that $r(c_j)=\ell(c_{j+1})$ with $1\leq j<4g$.  We also formally set $\ell(c_1)=r(c_{4g})=0$.

We begin by stating the obvious
\begin{lemma}\label{lem:halflength}
 If $\ell(c_j)>\abs{c_j}/2$ for some $j>1$,  then  the chord slide of $c_j$ to the left results in a new CG set $\mc{X}'$ with $\abs{\mc{X}'}<\abs{\mc{X}}$.   Similarly, if $r(c_j)>\abs{c_j}/2$ for some $j<4g$, the slide of $c_j$ to the right reduces the length 
  of the CG set. 
\end{lemma}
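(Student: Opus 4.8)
The plan is to collapse the whole statement to a one-line length computation by using that a chord slide is a single Nielsen transformation. As recorded in the discussion preceding Theorem~\ref{thm:bene}, sliding a chord end over a chord $x_i$ sends $x_i\mapsto (x_i^{\pm 1}x_k^{\pm 1})^{\pm 1}$ and fixes every other generator up to inversion, where $x_k$ is the chord carrying the slid end. Since $\abs{x}=\abs{\bar x}$, inverting a generator does not change its length, so in $\abs{\mc{X}}=2\sum_{i}\abs{x_i}$ only the single modified chord $x_i$ can contribute to a change. Thus the entire argument reduces to computing $\abs{x_i'}-\abs{x_i}$ for that one chord.

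\textbf{Identifying the modified chord.} First I would fix notation for the left slide: with $j>1$, the end $c_j$ slides over the chord whose end occupies position $j-1$. Writing $c_{j-1}=x_i^{\pm 1}$ for that chord (the one slid \emph{over}) and $c_j=x_k^{\pm 1}$ for the chord carrying the slid end, the new generator $x_i'$ is, up to inversion, the reduced product of the two formerly adjacent letters $c_{j-1}c_j$. Here I would invoke the definition \eqref{eq:ell} of $\ell(c_j)$ directly, which gives $\abs{c_{j-1}c_j}=\abs{c_{j-1}}+\abs{c_j}-2\ell(c_j)$, and hence $\abs{x_i'}=\abs{c_{j-1}}+\abs{c_j}-2\ell(c_j)$, while $\abs{x_i}=\abs{c_{j-1}}$.

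\textbf{Assembling.} Both ends of the modified chord acquire the new length $\abs{x_i'}$ while all other chord ends retain their lengths, so
\[
\abs{\mc{X}'}-\abs{\mc{X}}=2\bigl(\abs{x_i'}-\abs{x_i}\bigr)=2\bigl(\abs{c_j}-2\ell(c_j)\bigr),
\]
which is strictly negative exactly when $\ell(c_j)>\abs{c_j}/2$, yielding the first claim. The right slide is the verbatim same computation with $c_{j-1}$ replaced by $c_{j+1}$ and $\ell(c_j)$ replaced by $r(c_j)=\ell(c_{j+1})$, the cancellation in $c_jc_{j+1}$.

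\textbf{Main obstacle.} The only point requiring genuine care is justifying the identification $\abs{x_i'}=\abs{c_{j-1}c_j}$ uniformly across the four slide types of Figure~\ref{fig:sl}. I would check each of the four local pictures, confirming in every case that the slid-over chord is relabeled by the reduced concatenation of the two formerly adjacent letters (up to an overall inversion the slide may introduce, which does not affect length), and that the slider chord $x_k$ is at most inverted. I would also note that $i\neq k$, since a chord end is never slid over its own chord, so that $c_{j-1}c_j$ genuinely involves two distinct generators and the formula $\abs{\mc{X}}=2\sum_i\abs{x_i}$ is applied correctly. Once this bookkeeping is settled, the inequality is immediate from the definition of $\ell$.
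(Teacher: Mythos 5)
Your proposal is correct and follows essentially the same route as the paper: both arguments observe that a slide changes only the two ends of the chord slid over, replacing the letter $c_{j-1}$ (and its inverse at the opposite end) by the reduced concatenation $c_{j-1}c_j$ (and its inverse), and then apply the definition \eqref{eq:ell} to see the length drops precisely when $\ell(c_j)>\abs{c_j}/2$. Your extra bookkeeping (the Nielsen-transformation phrasing, checking the four slide types, and noting the slid end's chord is distinct from the chord slid over) is a more explicit rendering of what the paper leaves implicit, not a different argument.
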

\begin{proof}
The slide of $c_j$ to the left replaces the chord ends $c_{j-1}$ and $\bar{c}_{j-1}$ with $c_{j-1}c_j$ and $\bar{c_j} \bar{c}_{j-1}$ and leaves the others unchanged (up to permutation).  Thus, 
 if  $\ell(c_j)>\abs{c_j}/2$,  then  $\abs{c_{j-1}c_j}<\abs{c_{j-1}}$  by \eqref{eq:ell}, so the result follows.  The argument for the slide to the right is analogous.  
\end{proof}

If we were able to show that every CG set always had an associated chord end $c_j$ with $\ell(c_j)>\abs{c_j}/2$ or $r(c_j)>\abs{c_j}/2$,  then there would always be a chord slide which reduced the length, and we would  obtain our main result Theorem \ref{thm:fmain}.  However, this is not always the case, and we will have to make use of a more subtle argument using the energy  function $\len{\cdot}$.    Before stating the next  Lemma, we develop some more notation.  

\begin{definition}
We say that a chord end $c_j$ is \emph{balanced} if $\ell(c_j)=r(c_j)=\abs{c_j}/2$.
\end{definition}
 
 Note that if $c_j$ is a balanced chord end corresponding to a CG set $\mc{X}$, then sliding  $c_j$ in either direction does not change the length $\abs{\mc{X}}$ of the CG set.  However, we have the following
 \begin{lemma}\label{lem:balancedreduces}
 Assume that no chord slide reduces the length of a CG set $\mc{X}$.    Then, if $c_j$ is a balanced chord end of  $\mc{C_\mc{X}}$,  either the chord slide of $c_j$ to the left decreases the energy  of $\mc{X}$ or else the slide of $c_j$ to the right does.  
 \end{lemma}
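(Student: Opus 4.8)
The plan is to compute the energy change produced by each of the two candidate slides and to show that its sign is governed by a single leading lexicographic term, whose direction then selects the slide that decreases the energy.

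Since $c_j$ is balanced we have $\ell(c_j)=r(c_j)=\abs{c_j}/2=:b\ge 1$, so $1<j<4g$ and both neighbours $c_{j-1},c_{j+1}$ are present; moreover exactly half of $c_j$ cancels on each side. Write $c_j=pq$ as a reduced word with $\abs p=\abs q=b$, where $p$ is the prefix that cancels against $c_{j-1}$ and $q$ the suffix that cancels against $c_{j+1}$. Then the neighbouring ends must have reduced forms $c_{j-1}=u\bar p$ and $c_{j+1}=\bar q w$, with $\bar c_{j-1}=p\bar u$ and $\bar c_{j+1}=\bar w q$, for some words $u,w$ of lengths $a:=\abs u$ and $d:=\abs w$. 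Recalling from the proof of Lemma~\ref{lem:halflength} that the left slide of $c_j$ replaces $\{c_{j-1},\bar c_{j-1}\}$ by $\{c_{j-1}c_j,\bar c_j\bar c_{j-1}\}=\{uq,\bar q\bar u\}$ (fixing all other ends), and that the analogous right slide replaces $\{c_{j+1},\bar c_{j+1}\}$ by $\{c_jc_{j+1},\bar c_{j+1}\bar c_j\}=\{pw,\bar w\bar p\}$, the two energy changes are
\[
\Delta_L:=\norm{uq}+\norm{\bar q\bar u}-\norm{u\bar p}-\norm{p\bar u},
\]
\[
\Delta_R:=\norm{pw}+\norm{\bar w\bar p}-\norm{\bar q w}-\norm{\bar w q}.
\]

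Next I would expand each term using the defining formula for $\norm{\cdot}$, which weights the $i$-th letter of a reduced word by $(4g+1)^{i-1}$. Because all four words in $\Delta_L$ are reduced of the common length $a+b$ and pairwise share either the prefix $u$ or the suffix $\bar u$ (and similarly for $\Delta_R$ with $w$), the expansion collapses to
\[
\Delta_L=(4g+1)^a\,A+B,\qquad \Delta_R=-(4g+1)^d\,A-B,
\]
where $A:=\norm q-\norm{\bar p}$ and $B:=\norm{\bar q}-\norm p$. Since $p,q,\bar p,\bar q$ all have length $b$, their energies lie in $[(4g+1)^{b-1},(4g+1)^b)$, so $\abs A,\abs B<(4g+1)^b$. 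The key input is now the hypothesis that no slide reduces $\abs{\mc X}$: by Lemma~\ref{lem:halflength} this forces $\ell(c)\le\abs c/2$ and $r(c)\le\abs c/2$ for every end $c$, and applying this to $c_{j-1}$ and $c_{j+1}$ together with $r(c_{j-1})=\ell(c_j)=\abs{c_j}/2=r(c_j)=\ell(c_{j+1})$ yields $\abs{c_{j-1}},\abs{c_{j+1}}\ge\abs{c_j}$, i.e. $a\ge b$ and $d\ge b$. Hence $(4g+1)^a$ and $(4g+1)^d$ both strictly dominate $\abs B$, so for $A\ne 0$ the sign of $\Delta_L$ equals that of $A$ and the sign of $\Delta_R$ equals that of $-A$. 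Finally $A=0$ would give $\norm q=\norm{\bar p}$, hence $q=\bar p$ since $\norm{\cdot}$ is injective on $\pi$; but then $c_j=pq=p\bar p$ would not be reduced, a contradiction. Thus $A\neq 0$, and the right slide decreases the energy when $A>0$ while the left slide does when $A<0$.

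The step I expect to be the main obstacle is the bookkeeping in the middle paragraph: pinning down the exact reduced forms of the four affected ends after each slide and verifying that the leading coefficients are precisely $\pm(4g+1)^a$ and $\pm(4g+1)^d$ multiplying the \emph{same} quantity $A$. Everything then hinges on the inequalities $a,d\ge b$, which is exactly where the ``no length-reducing slide'' hypothesis enters essentially; without it the subleading term $B$ could overwhelm the leading term and the dichotomy would fail. I also note that the prefix-comparison Lemma~\ref{lem:plq} does not by itself settle this case, precisely because the two competing halves $p$ and $q$ have equal length, so one is forced to track the finer lexicographic term $A$ rather than merely comparing word lengths.
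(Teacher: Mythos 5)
Your proof is correct, and it rests on the same mechanism as the paper's --- the no-length-reduction hypothesis forces the neighbors of a balanced end to be at least as long as the cancelling halves, so a single leading base-$(4g+1)$ term governs the sign of the energy change --- but you execute it differently. The paper's proof is a two-line reduction: writing $c_j=p\bar q$ with $\abs{p}=\abs{q}$, it assumes WLOG $\len{p}<\len{q}$, notes only the one inequality $\abs{q}\leq\abs{w}$ for $c_{j+1}=qw$ (else the slide of $c_{j+1}$ to the left would reduce length), and then quotes Lemma \ref{lem:plq} to conclude that the right slide decreases the energy. You instead expand both $\Delta_L$ and $\Delta_R$ explicitly, derive both neighbor bounds $a\geq b$ and $d\geq b$, and in effect re-prove the needed instance of Lemma \ref{lem:plq} by hand. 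What your route buys is a sharper conclusion than the lemma asserts: since $\Delta_L$ and $\Delta_R$ are governed by the same quantity with opposite signs, exactly one of the two slides decreases the energy while the other strictly increases it. One small point worth making explicit: the step ``the sign of $\Delta_L$ equals that of $A$'' needs $(4g+1)^a\abs{A}>\abs{B}$, which holds because the energies are positive integers, so $A\neq 0$ forces $\abs{A}\geq 1$.

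Your closing remark that Lemma \ref{lem:plq} ``does not by itself settle this case'' is, however, at odds with the paper, which settles it exactly by that lemma; the discrepancy traces to an indexing ambiguity in the definition of $\len{\cdot}$. As literally printed, the $i$-th letter carries weight $(4g+1)^{i-1}$, making the \emph{last} letter most significant; under that reading your expansions $\Delta_L=(4g+1)^aA+B$ and $\Delta_R=-(4g+1)^dA-B$ are correct, and indeed the printed proof of Lemma \ref{lem:plq} would fail. But for that proof to be valid (and for the final step of the main theorem, which perturbs the \emph{first} letter of $c_2$), the first letter must be most significant; under that intended convention $A$ and $B$ trade places in your formulas, giving $\Delta_L=(4g+1)^aB+A$ and $\Delta_R=-(4g+1)^dB-A$, and the governing quantity becomes $B=\len{\bar q}-\len{p}$, which is precisely the paper's comparison (its $c_j=p\bar q$ makes its $q$ your $\bar q$). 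Your argument survives this swap verbatim --- only the criterion selecting the decreasing direction changes --- so the proof is robust to the convention, but you should state which convention you use and align the leading term accordingly.
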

 \begin{proof}
 Since $c_j$ is balanced, it can be written (as a reduced word)
 \[
 c_j=p\bar q
 \]
 with $\abs{p}=\abs{q}$.  Note that necessarily, $p\neq q$.  Assume, without loss of generality, that $\len{p}<\len{q}$.  Then we claim that the slide of $c_j$ to the right reduces the energy  of the CG set.  (In a completely analogous way, the slide to the left would decrease the energy if $\len{q}<\len{p}$.)  To see this, write
 \[ 
c_{j+1}=q w.
 \]
 We know that $\abs{q}\leq\abs{w}$, or else the chord slide of $c_{j+1}$ to the left would decrease the length of $\mc{X}$.  
 
  \begin{figure}[h!]
\input{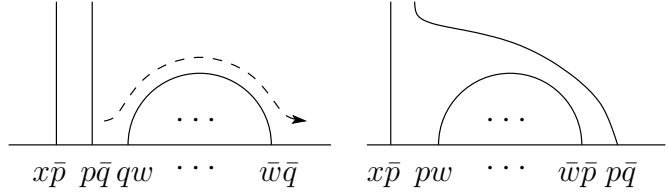}
  \caption{A balanced chord end slid to the right.}\label{fig:pq}
 \end{figure}
 
 After the slide of $c_j$ to the right, the letters of the CG set get altered (up to permutation) by $c_{j+1}=q w$ being replaced by $p w$. See Figure \ref{fig:pq}.  Similarly, the opposite end $\overline{c_{j+1}}=\bar w \bar q$ is replaced by  $ \bar w \bar p$.  The result  then follows from Lemma \ref{lem:plq}.
 \end{proof}

Given a CG set $\mc{X}$ with chord ends $\mc{C}=\{c_j\}$, consider the  unreduced word  $\prod_{j=1}^{4g} c_j $. We know that the word reduces to $\overline{\partial\Sigma_{g,1}}$, so there must exist 
\[
\frac12\left(\sum_j \abs{c_j} -4g \right)
\]
many cancellations.  Note that  a priori  there may be different choices of cancellation ``schemes''  available.  So let us fix one choice of cancellation.  We write $W_j$ for the subword of $c_j$ which is not cancelled under this process, $L_j$ for  the subword cancelled on the left, and $R_j$ for  the subword cancelled on the right.  Thus,
\[
c_j=L_j W_j R_j, \quad \textrm{ and } \quad \prod_{j=1}^{4g} W_j=\overline{\partial\Sigma_{g,1}}
\]
with both $L_1$ and $R_{4g}$ the empty word.
Note that in general, there is no correlation between $\ell(c_j)$ and $\abs{L_j}$. 

\begin{lemma}\label{lem:emptyW}
For a given CG set $\mc{X}$, assume that no chord slide reduces the length $\abs{\mc{X}}$ of $\mc{X}$.  Then if there is a  cancellation scheme $\{c_j=L_jW_jR_j\}$ for the word $\prod_{j=1}^{4g} c_j $ with some $W_j$ the empty word, then some chord end $c_i$ is balanced.  
\end{lemma}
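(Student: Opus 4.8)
The plan is to run a squeeze argument on lengths, feeding off the non-reduction hypothesis through Lemma~\ref{lem:halflength}. First I would extract the two pointwise bounds $\ell(c_j)\le\abs{c_j}/2$ for every $j>1$ and $r(c_j)\le\abs{c_j}/2$ for every $j<4g$. These are exactly the contrapositive of Lemma~\ref{lem:halflength}: were $\ell(c_j)>\abs{c_j}/2$ for some $j$, the slide of $c_j$ to the left would shorten $\mc{X}$, contrary to hypothesis, and symmetrically on the right. Next I would unwind the meaning of an empty $W_j$. Since $c_j=L_jW_jR_j$ is a factorization of the reduced word $c_j$, the vanishing of $W_j$ gives $c_j=L_jR_j$ and hence $\abs{c_j}=\abs{L_j}+\abs{R_j}$, so the \emph{entire} letter $c_j$ is consumed in cancellation, its prefix $L_j$ dying on the left and its suffix $R_j$ dying on the right.

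The crux is to bound the scheme-cancellations $\abs{L_j},\abs{R_j}$ by the pairwise cancellations $\ell(c_j),r(c_j)$. In the cancellation scheme $L_j$ is the prefix of $c_j$ that annihilates the suffix $R_{j-1}=\overline{L_j}$ of $c_{j-1}$; thus $R_{j-1}$ and $L_j$ form a cancellable common segment of the two fixed reduced words $c_{j-1}$ and $c_j$. Since $\ell(c_j)$ is by definition the \emph{full} amount of reduction occurring in the product $c_{j-1}c_j$, no scheme can cancel a longer matching segment than that, so $\abs{L_j}\le\ell(c_j)$, and symmetrically $\abs{R_j}\le r(c_j)$. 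Combining this with the previous paragraph yields the chain $\abs{c_j}=\abs{L_j}+\abs{R_j}\le\ell(c_j)+r(c_j)\le\abs{c_j}/2+\abs{c_j}/2=\abs{c_j}$, which forces equality everywhere; in particular $\ell(c_j)=r(c_j)=\abs{c_j}/2$, so $c_j$ is balanced and we may take $i=j$.

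The one point requiring care is that the squeeze only applies for an interior index, so I must dispose of the boundary cases $j=1$ and $j=4g$. If $j=1$ then $L_1$ is empty by convention, so an empty $W_1$ would give $c_1=R_1$ and $\abs{c_1}=\abs{R_1}\le r(c_1)\le\abs{c_1}/2$, impossible as $\abs{c_1}\ge1$; the case $j=4g$ is symmetric. Hence every empty $W_j$ in fact satisfies $1<j<4g$, where the argument above is valid. I expect the main obstacle to be precisely the inequality $\abs{L_j}\le\ell(c_j)$: the paper warns that there is in general \emph{no} correlation between $\ell(c_j)$ and $\abs{L_j}$, and indeed they can differ, but the direction needed here is the safe one, following from $\ell(c_j)$ being the maximal cancellation of $c_{j-1}c_j$. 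Making this comparison between the \emph{global} scheme and the \emph{local} pairwise reductions precise is the delicate step; everything else is the elementary length bookkeeping of free-group cancellation.
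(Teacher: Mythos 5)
There is a genuine gap, and it sits exactly at the step you yourself flagged as delicate: the inequality $\abs{L_j}\le\ell(c_j)$. Your justification assumes that in the cancellation scheme $L_j$ is annihilated entirely by $c_{j-1}$, i.e., that $R_{j-1}=\overline{L_j}$. That identity holds when every $W_i$ is nonempty, because then each chord end leaves a surviving buffer and all cancellation is forced to occur between \emph{adjacent} ends (this is precisely how the paper uses it later, in the proof of Lemma \ref{lem:w}). But it fails exactly in the situation of the present lemma: once some neighboring $W_{j-1}$ is empty, letters of $L_j$ can be matched, in the non-crossing pairing underlying the scheme, with letters of $c_{j-2}$ or beyond, so $\abs{L_j}$ can strictly exceed $\ell(c_j)$, which only measures cancellation in the single product $c_{j-1}c_j$. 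A schematic fragment: $c_{j-2}=pab$, $c_{j-1}=\bar b c$, $c_j=\bar c\bar a r$, with the scheme pairing $b$ with $\bar b$, $c$ with $\bar c$, and $a$ with $\bar a$. Here $W_{j-1}$ is empty, $\abs{L_j}=\abs{\bar c\bar a}=2$ but $\ell(c_j)=1$, and all the half-length constraints $\ell(c_i),r(c_i)\le\abs{c_i}/2$ that the no-length-reducing hypothesis gives you are satisfied locally, so the hypothesis does not rescue the inequality. Worse, your conclusion is strictly stronger than the lemma's and is false in general: taking $\abs{r}=1$ above gives $\abs{c_j}=3$, which is odd, so $c_j$ can never be balanced even though $W_j$ is empty; the balanced end is elsewhere (here $c_{j-1}$, with $\ell=r=1=\abs{c_{j-1}}/2$). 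Your treatment of the boundary cases $j=1$ and $j=4g$ leans on the same inequality and inherits the gap.

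The paper's proof is engineered to get around precisely this nonlocality. It takes the \emph{maximal} interval $I=[m,n]$ of indices with $W_i$ empty, observes that $W=R_{m-1}\left(\prod_{i=m}^n c_i\right)L_{n+1}$ reduces to the identity, and then assumes for contradiction that $\ell(c_i)+r(c_i)<\abs{c_i}$ for all $i$. That assumption allows each $c_i$ to be refactored as $\mc{L}_i\mc{W}_i\mc{R}_i$ using the \emph{true pairwise} cancellation amounts, with every buffer $\mc{W}_i$ nonempty; adjacent pieces $\mc{R}_i\mc{L}_{i+1}$ cancel exactly (since $r(c_i)=\ell(c_{i+1})$ by definition) and the nonempty buffers block any further reduction, so $W$ reduces to the nonempty reduced word $\prod\mc{W}_i$, contradicting $W=1$ and uniqueness of reduced forms. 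The upshot is only that \emph{some} $c_i$, in or adjacent to the run of empty $W_i$'s, satisfies $\ell(c_i)+r(c_i)\ge\abs{c_i}$ and is hence balanced --- not the particular end with $W_j$ empty. To repair your argument you would need this kind of global bookkeeping across the whole run of empty $W_i$'s; a pointwise squeeze at the single index $j$ cannot work.
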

\begin{proof}
Note that it suffices to prove that there is some $i$ with $\ell(c_i)+r(c_i)\geq \abs{c_i}$, as then either $\ell(c_i)=r(c_i)=\abs{c_i}/2$, $\ell(c_i)>\abs{c_i}/2$, or $r(c_i)>\abs{c_i}/2$, and the latter two cases  cannot occur due to our  assumption  that no chord slide reduces the length of $\mc{X}$.

Choose a cancellation scheme $\{c_j=L_jW_jR_j\}$ and assume that  $W_j$ is the empty word for some $1\leq j\leq 4g$.  If $j=1$ or $j=4g$, then the chord slides of $c_1$ to the right or $c_{4g}$ to the left respectively decrease the length of $\mc{X}$, so assume that $1<j<4g$.  Let $I=[m,n]\cap \bZ=\{m, m+1,\ldots, n\}$   be the maximal index interval  containing $j$ such that for all $i\in I$,  $W_{i}$ is  the empty word.  In other words,  the product
\begin{equation}\label{eq:W}
W=R_{m-1}\left(\prod_{i=m}^n c_{i}\right) L_{n+1}=\prod_{i=m}^{n+1} R_{i-1}L_{i}
\end{equation}
reduces to the identity element in $\pi$.  

We now assume that  $\ell(c_{i})+r(c_{i})<\abs{c_{i}}$ for all $1\leq i\leq 4g$ and derive a contradiction. Define a new factorization of each $c_{i}$ with $m\leq i\leq n$ by 
\[
c_{i}=\mc{L}_i \mc{W}_i \mc{R}_i \quad \textrm{with} \quad \abs{\mc{L}_i}=\ell(c_i),\;  \abs{\mc{R}_i}=r(c_i),\; \abs{\mc{W}_i}>0 .
\]
Similarly, write $R_{m-1}=\mc{W}_{m-1}\mc{R}_{m-1}$ and $L_{n+1}=\mc{L}_{n+1}\mc{W}_{n+1}$ with $\abs{\mc{R}_{m-1}}=r(c_{m-1})$ and $\abs{\mc{L}_{n+1}}=\ell(c_{n+1})$.  Note that without our assumption $\ell(c_{i})+r(c_{i})<\abs{c_{i}}$, the existence of such a factorization is not guaranteed.  

With this new factorization, the product $W$ of \eqref{eq:W} can be written as 
\[
W=\mc{W}_{m-1}\mc{R}_{m-1}\left(\prod_{i=m}^n \mc{L}_i \mc{W}_i \mc{R}_i \right) \mc{L}_{n+1}\mc{W}_{n+1},
\] 
which reduces to 
\[
W'=\prod_{i=m-1}^{n+1} \mc{W}_i, \quad \abs{W'}>0
\]
by the definitions of the numbers $\ell(c_i)$ and $r(c_i)$.  However, the definitions of $\ell(c_i)$ and $r(c_i)$ also ensure that $W'$ cannot be reduced any further, while on the other hand, we know that $W'$ reduces to the empty word.  Thus, we have our contradiction and the lemma is proved.
\end{proof}

In the opposite direction, we finish this section with our final lemma.  Before we state it, we introduce some helpful terminology.  Given a word 
$
w=\prod_{i=1}^{k} x_i,
$
we call the subwords 
\[
\prod_{i\leq k/2} x_i \quad \textrm{and} \quad \prod_{i\geq k/2} x_i
\]
 the \emph{left half} and \emph{right half} of $w$ respectively.

\begin{lemma}\label{lem:w}
Assume that there is no length reducing chord slide for a given a CG set $\mc{X}$.   If for some  cancellation scheme $\{c_j=L_jW_jR_j\}$ there is no $i$ with $W_i$  the empty word, then $W_j=\sigma_j$ for all $1\leq j\leq 4g$.  Moreover, in this case, if 
  $c_j=\sigma_j$ for some $j$, then no  chord end other than $c_j$ contains the letter $\sigma_j$.  
\end{lemma}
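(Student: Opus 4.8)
The plan is to treat the two assertions in turn: the identity $W_j=\sigma_j$ is a short length count, while the isolation of $\sigma_j$ is an argument by contradiction that manufactures a length-reducing chord slide, contradicting the standing hypothesis.

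For the first assertion, I would start from the defining property $\prod_{j=1}^{4g} W_j=\overline{\partial\Sigma_{g,1}}$. The letters comprising the words $W_j$ are precisely those surviving the reduction of $\prod_j c_j$, so their concatenation is already reduced; since $\overline{\partial\Sigma_{g,1}}=\prod_{j=1}^{4g}\sigma_j$ is a reduced word of $\abs{\cdot}$-length exactly $4g$, I get $\sum_j\abs{W_j}=4g$. By hypothesis no $W_j$ is empty, so $\abs{W_j}\ge 1$ for each of the $4g$ indices, forcing $\abs{W_j}=1$ for all $j$. Then $\prod_j W_j$ and $\prod_j\sigma_j$ are two reduced words of length $4g$ representing the same element, and uniqueness of reduced words in $\pi$ gives $W_j=\sigma_j$ for every $j$.

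For the ``moreover'' assertion, suppose $c_j=\sigma_j$, so $\abs{c_j}=1$ and $L_j=R_j$ are empty. I would first note that $\ell(c_j)=r(c_j)=0$: were, say, $\ell(c_j)=1$, then $\ell(c_j)=1>\abs{c_j}/2$ and Lemma \ref{lem:halflength} would produce a length-reducing slide, contrary to hypothesis; the same rules out $r(c_j)=1$. Thus $c_j$ does not cancel with either neighbour. Now assume toward a contradiction that some chord end $c_i$ with $i\ne j$ contains the letter $\sigma_j$. Since $W_i=\sigma_i\ne\sigma_j$, this occurrence lies in the cancelled prefix $L_i$ or suffix $R_i$, so in the reduction of $\prod_m c_m$ it is matched with an occurrence of $\bar\sigma_j$.

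The key structural step is to locate this cancelling pair precisely. Because each chord end $c_m$ is itself a reduced word and free cancellation is non-crossing, a pair $(\sigma_j,\bar\sigma_j)$ that annihilates directly against each other cannot lie within a single chord end, nor can the block between its two letters contain a surviving letter $W_m=\sigma_m$ (such a survivor would not cancel); hence the pair must straddle a single chord-end boundary, with the $\sigma_j$ in the cancelled suffix $R_a$ of some $c_a$ and the matched $\bar\sigma_j$ at the mirrored position of the cancelled prefix $L_{a+1}=\overline{R_a}$ of $c_{a+1}$. It remains to convert this into a length decrease: the chord through $c_j$ has value $\sigma_j=s_k^{\pm1}$, a single generator whose opposite end is the lone letter $\bar\sigma_j$, and I would slide this chord so as to cancel the occurrence of $\sigma_j$ inside $R_a$, thereby driving the cancellation at some chord end above half its length and invoking Lemma \ref{lem:halflength} for the final contradiction. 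I expect this last step to be the main obstacle: the remark following the definition of the chord slide warns that the cancellation $\abs{R_a}$ realised by the scheme need not equal the pairwise quantity $r(c_a)$, so care is needed to verify that the resulting slide genuinely exceeds the half-length threshold rather than merely meeting it.
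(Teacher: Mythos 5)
Your proof of the first assertion is correct and essentially the paper's: since $\prod_j W_j=\overline{\partial\Sigma_{g,1}}$ is reduced of length $4g$, nonemptiness forces $\abs{W_j}=1$ for all $j$, and uniqueness of reduced words gives $W_j=\sigma_j$; your non-crossing observation is the paper's remark that $R_j=\overline{L_{j+1}}$. Your localization of a cancelling pair $(\sigma_j,\bar\sigma_j)$ across a single chord-end boundary is likewise sound, and your preliminary step $\ell(c_j)=r(c_j)=0$ is fine (though not needed below).

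The gap is exactly the step you flag: ``slide this chord so as to cancel the occurrence of $\sigma_j$ inside $R_a$'' is not a construction the moves permit. A chord slide modifies only the label of the chord adjacent to the slid end, multiplying it by the \emph{entire} label of the slid chord at the outermost position of the word; it cannot reach a letter buried inside the suffix $R_a$, and the $\sigma_j$-chord need not be anywhere near $c_a$ in the diagram. The slides involving the $\sigma_j$-chord are also useless: sliding either of its ends reduces length only if that end cancels at least one letter with its neighbor, which you ruled out, while sliding a neighbor over it relabels the $\sigma_j$-chord, not $c_a$. Any multi-slide repair changes the CG set, and the hypothesis ``no length-reducing slide'' applies only to $\mc{X}$, so no contradiction with the standing hypothesis can be extracted this way. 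The paper's proof of the ``moreover'' part uses no slide at all. From Lemma \ref{lem:halflength} and the hypothesis it first derives $\big\lvert\abs{L_m}-\abs{R_m}\big\rvert\leq\abs{W_m}=1$ for every $m$, so each surviving letter $W_m=\sigma_m$ sits near the middle of $c_m$, and hence any occurrence of a letter $\sigma_i\neq\sigma_m$ in the left (resp.\ right) half of $c_m$ must lie in $L_m$ (resp.\ $R_m$). This localization propagates: $\sigma_j\in L_k$ forces $\bar\sigma_j\in R_{k-1}$; since the opposite end $c_{k_1}$ of $c_{k-1}$ carries the inverse word, $\sigma_j$ appears in the \emph{left half} of $c_{k_1}$, hence in $L_{k_1}$ (and $k_1\neq j$ because $\abs{c_{k_1}}>1$); then $\bar\sigma_j\in R_{k_1-1}$, and so on. This walk traces the boundary cycle of the fatgraph, so it cannot recur and must terminate with $\sigma_j$ in $L_1$ or $L_j$, or $\bar\sigma_j$ in $R_j$ or $R_{4g}$ --- all empty words, the desired contradiction. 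That near-middle estimate together with the mirrored-opposite-end chain is the idea missing from your sketch.
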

\begin{proof}
We begin with some observations.
First of all, since 
$
\sum_{j=1}^{4g} \abs{W_j}=\abs{\overline{\partial\Sigma_{g,1}}}=4g,
$
it is immediate that if no $W_j$ is the empty word, then $\abs{W_j}=1$ for all $j$.  In fact, it is clear that we must have $W_j=\sigma_j$ for all $j$ and that $R_j=\overline{L_{j+1}}$ for $1\leq j<4g$.  Second, using Lemma \ref{lem:halflength},  the condition that no length reducing chord slide exists forces $\big\lvert\abs{L_j}-\abs{R_j}\big\rvert\leq \abs{W_j}$ for all $j$, which in our case  gives 
$\big\lvert\abs{L_j}-\abs{R_j}\big\rvert\leq 1$, meaning  each $W_j$ lies ``near the middle'' of $c_j$. As a  consequence,  any letter $\sigma_i\neq\sigma_j$ appears in the left half of $c_j$ if and only if it appears in $L_j$, and it appears in the right half if and only if it appears in $R_j$.  
  
Now assume that for some $j$ we have  $c_j=\sigma_j$ and that  the letter $\sigma_j$ appears in the chord end $c_k$ with $k\neq j$.  Since $\sigma_k\neq\sigma_j$, $\sigma_j$ appears in either $L_k$ or $R_k$, and we assume without loss of generality that it appears in $L_k$, in which case, $k$ must be greater than $1$.  Then $\bar\sigma_j$ must appear in $R_{k-1}$.  Let $c_{k_1}$ be the opposite end of $c_{k-1}$ so that we have $\sigma_j$ appearing in $c_{k_1}$. Note that we cannot have $k_1=j$ since $\abs{c_{k_1}}>1=\abs{c_j}$.  Since $\bar\sigma_j$ lies in the right half of $c_{k-1}$, $\sigma_j$ must lie in the left half of $c_{k_1}$.  Since $W_{k_1}\neq \sigma_j$,  $\sigma_j$ must appear in $L_{k_1}$.  Similarly, we see that $\bar\sigma_j$ appears in $R_{k_1-1}$, thus $\sigma_j$ appears in the left half  of the opposite end of $c_{k_1-1}$.  Continuing in this way, we eventually see that either $\sigma_j$  lies in $L_1$ or $L_j$, or that $\bar\sigma_j$ lies in $R_j$ or $R_{4g}$ (see the last section where we discuss the boundary cycle of a fatgraph), which gives us a contradiction since  each $L_1$, $R_{4g}$, $L_j$, and $R_j$  is the empty word.
\end{proof}

\section{Main Result}

Again, we fix a basepoint $\mc{S}$ and corresponding length functions.  We now present our main theorem: 
\begin{theorem}
Given a CG set $\mc{X}$ not equivalent to $\mc{S}$,  there is some chord slide  which reduces the energy  $\len{\mc{X}}$ of $\mc{X}$.  
\end{theorem}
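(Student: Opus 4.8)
The plan is to prove the contrapositive: assuming that \emph{no} chord slide reduces the energy $\norm{\mc{X}}$, I will show that $\mc{X}$ must be equivalent to $\mc{S}$. The first step is to note that this assumption also forbids length-reducing slides. Indeed, the slide of a chord end $c_j$ to the left replaces the two ends $c_{j-1}$ and $\bar c_{j-1}$ of a single chord by the mutually inverse words $c_{j-1}c_j$ and $\bar c_j\bar c_{j-1}$, which therefore have equal length; so if such a slide lowers $\abs{\mc{X}}$, each of these two ends gets strictly shorter, and since $\abs{w}<\abs{v}$ implies $\norm{w}<\norm{v}$, the slide lowers $\norm{\mc{X}}$ as well. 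Hence under the standing assumption no chord slide reduces the length $\abs{\mc{X}}$ either, which is exactly the hypothesis needed to invoke Lemmas \ref{lem:halflength}, \ref{lem:balancedreduces}, \ref{lem:emptyW}, and \ref{lem:w}.

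Next I would argue by cases according to whether some cancellation scheme $\{c_j=L_jW_jR_j\}$ of the word $\prod_{j=1}^{4g} c_j=\overline{\partial\Sigma_{g,1}}$ has an empty $W_j$. If it does, then Lemma \ref{lem:emptyW} produces a balanced chord end, and Lemma \ref{lem:balancedreduces} then yields a chord slide (to the left or right) strictly decreasing $\norm{\mc{X}}$, contradicting the standing assumption. Therefore no cancellation scheme has an empty $W_j$; choosing any scheme and applying Lemma \ref{lem:w} then gives $W_j=\sigma_j$ for every $j$, together with the uniqueness statement that whenever a chord end satisfies $c_j=\sigma_j$, the letter $\sigma_j$ occurs in no other chord end.

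It remains to upgrade ``$W_j=\sigma_j$ for all $j$'' to ``$c_j=\sigma_j$ for all $j$,'' since the latter says exactly that $\mc{C}_{\mc{X}}=\mc{C}_{\mc{S}}$, i.e.\ that $\mc{X}$ is equivalent to $\mc{S}$, which completes the contrapositive. Writing $R_j=\overline{L_{j+1}}$ with $L_1$ and $R_{4g}$ empty, one has $\abs{\mc{X}}=4g+2\sum_j\abs{L_j}$, so $\mc{X}$ being inequivalent to $\mc{S}$ is the same as some cancelled block $L_j$ being nonempty, and the goal is to show this cannot occur. I expect this to be the main obstacle. The naive hope that the ``near the middle'' estimate $\big\lvert\abs{L_j}-\abs{R_j}\big\rvert\le 1$ already forces some chord end to be a single letter is false: a profile of cancellation lengths such as $(0,1,1,\dots,1)$ satisfies every inequality coming from length-irreducibility, yet contains no length-one end. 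What rules such profiles out is the extra constraint that the two ends of each chord are inverse words, so that opposite ends have equal length and matching letters; the plan is therefore to trace each surviving letter $\sigma_j$ through these inverse pairs along the boundary cycle of the fatgraph, exactly as in the proof of Lemma \ref{lem:w}, and to show that any nonempty $L_j$ propagates until it is forced either into one of the empty words $L_1,R_{4g},L_k,R_k$ or into a length or letter mismatch between a chord and its opposite end, yielding the desired contradiction.
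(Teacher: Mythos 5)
Your reduction to the regime where $W_j=\sigma_j$ for all $j$ is correct and matches the paper step for step: your preliminary observation that an energy-minimal $\mc{X}$ admits no length-reducing slide (since a length-reducing slide shortens both ends of the affected chord and $\abs{w}<\abs{v}$ implies $\len{w}<\len{v}$) is sound and in fact makes explicit something the paper leaves implicit, and the dichotomy via Lemmas \ref{lem:emptyW}, \ref{lem:balancedreduces} and \ref{lem:w} is exactly the paper's. But the proof stops where the paper's real work begins. The last step --- upgrading ``$W_j=\sigma_j$ for all $j$'' to ``$c_j=\sigma_j$ for all $j$'' --- is offered only as a plan, and the plan aims at the wrong target. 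You propose to trace letters through inverse pairs, as in the proof of Lemma \ref{lem:w}, and force any nonempty $L_j$ into a purely structural contradiction, i.e.\ to show such configurations cannot exist at all. They can: CG sets inequivalent to $\mc{S}$ with no length-reducing slide and no balanced end are precisely what the final case of the paper's proof is about (cf.\ the remark after Lemma \ref{lem:halflength} that length reduction is not always available), and for these the correct conclusion is not nonexistence but the existence of an \emph{energy}-reducing, length-\emph{preserving} slide. Your sketch never invokes the standing hypothesis ``no slide reduces $\norm{\mc{X}}$'' in this final regime, nor the lexicographic weighting $\len{\sigma_j}=j$, so it cannot close: every tool you allow yourself is insensitive to the energy, while the statement being proved is false without it.

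For comparison, here is how the paper finishes. Writing $c_j=L_j\sigma_jR_j$ with $R_j=\overline{L_{j+1}}$, the near-middle estimate forces $\abs{R_1}\leq 1$. If $\abs{R_1}=1$, the slide of $c_1$ to the right replaces $c_2=L_2\sigma_2R_2$ by $\sigma_1\sigma_2R_2$ and $\overline{c_2}$ by $\overline{R}_2\bar\sigma_2\bar\sigma_1$; since $\sigma_1\neq L_2$ (else $c_1$ would be unreduced), one has $\len{\sigma_1}=1<\len{L_2}$, and Lemma \ref{lem:plq} gives a strict energy drop. If $R_1$ is empty, take the minimal $j$ with $R_j$ nonempty (it exists because $\mc{X}$ is not equivalent to $\mc{S}$); then $c_i=\sigma_i$ for $i<j$ and $\abs{R_j}=1$, and the second clause of Lemma \ref{lem:w} --- which you quoted but never actually used --- guarantees that the single letter $L_{j+1}=\overline{R_j}$ is none of $\sigma_1,\dots,\sigma_j$, whence $\len{\sigma_j}=j<\len{L_{j+1}}$ and the slide of $c_j$ to the right reduces the energy, again by Lemma \ref{lem:plq}. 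So the missing idea is that the well-ordering built into $\len{\cdot}$ (earlier boundary letters are strictly lighter) is what breaks the tie at the first position where cancellation survives; until you carry out this construction, or a genuine substitute that uses the no-energy-reduction hypothesis, your argument has a gap at its essential step.
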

\begin{proof}
If any chord slide reduces the word length of $\mc{X}$, then we are done, so assume otherwise.  Also, if under any cancellation scheme $\{c_j=L_jW_jR_j\}$ there is some chord end $c_j$ with $W_j$ the empty word, we are done by Lemmas \ref{lem:balancedreduces} and  \ref{lem:emptyW}, so assume that this is not the case.  

  \begin{figure}[h!]
\input{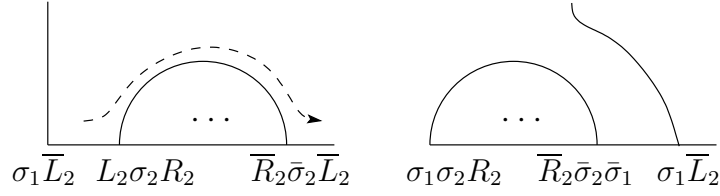}
  \caption{The case $\abs{R_1}=1$. }\label{fig:final}
 \end{figure}

With these assumptions,  the conditions of Lemma \ref{lem:w} are satisfied so that  each $W_j$ is the single letter word $W_j=\sigma_j$.  In this case, we know that $c_1=\sigma_1 R_1$ and $c_2=L_2 \sigma_2 R_2$ with $L_2=\bar R_1$, etc.  Furthermore, we must have $\abs{R_1}\leq 1$ otherwise we could slide $c_1$ to the right and decrease the length of $c_2$. 
  
  Now, if $\abs{R_1}=1$, then  sliding  $c_1$ to the right would have  the effect of changing the first letter of $c_2$ from $ L_2$ to $\sigma_1$,  changing the last letter of   $\bar c_2$  from $\bar L_2$ to $\bar \sigma_1$, and keeping all other chord ends unchanged.  See Figure \ref{fig:final}.  Since $\sigma_1\neq \overline{ R_1}=L_2$, we would then have $1=\len{\sigma_1}<\len{L_2}$.  Thus, 
\[
\len{\sigma_1 \sigma_2 R_2 }+ \len{\overline{ R}_2  \bar{ \sigma}_2  \bar \sigma_1}<\len{L_2 \sigma_2 R_2 }+ \len{\overline{ R}_2  \bar{\sigma}_2  \overline{L}_2}
\]
by Lemma \ref{lem:plq}.
In other words, this would  reduce the energy  $\len{\mc{X}}$ of $\mc{X}$, and we would be done.

So we are left with considering the case where $\abs{R_1}=0$, or in other words, the case where $c_1=\sigma_1$.  In this case, we must have $c_2=\sigma_2 R_2$.  Lemma \ref{lem:w} says that $R_2$ cannot contain the letter $\sigma_1$.  Thus, if $\abs{R_2}\neq 0$,  the same argument as above  shows that the chord slide of $c_2$ to the right reduces the energy.  If $\abs{R_2}=0$, then we look at $c_3$, etc., until we find the minimum $j$ such that $\abs{R_j}\neq 0$.  Note that a minimum $j$ must exist since $\mc{X}$ is not equivalent to $\mc{S}$.  In this case, we have $c_j=\sigma_j R_j$ with $\abs{R_j}=1$.  Again, Lemma \ref{lem:w} says $R_j$ cannot contain any of the letters $\sigma_i$ for $i<j$ so that the chord slide of $c_j$ to the right reduces the energy. 
\end{proof}

As an immediate consequence, we obtain the 
\begin{theorem}\label{thm:fmain2}
There is an algorithm which from a given marked linear chord diagram $G_0$ provides a sequence of energy  decreasing chord slides
\[
G_0\ra G_1\ra \dotsm \ra G_k, \quad \norm{G_i} < \norm{G_{i-1}}
\]
with $G_k=G_{\mc{S}}$.   In particular, by applying this to $G_0=G_{\varphi(\mc{S})}$ for some $\varphi\in MC_{g,1}$, we obtain an algorithm which factors  mapping classes  into  sequences of chord slides.  
\end{theorem}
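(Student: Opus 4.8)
The plan is to realize the algorithm as the greedy iteration of the energy-reducing chord slide supplied by the preceding theorem, and then to check termination and to identify the output. First I would make the algorithm explicit: given $G_0$, at stage $i$ test whether the current marked linear chord diagram $G_i=G_{\mc{X}_i}$ is equivalent to $G_{\mc{S}}$; if it is, halt with $k=i$; otherwise the preceding theorem produces a chord slide $G_i\ra G_{i+1}$ with $\norm{G_{i+1}}<\norm{G_i}$, which we perform, breaking any ambiguity among competing energy-reducing slides by a fixed rule (for instance, always choosing the one acting on the chord end of smallest index). This manifestly yields a strictly energy-decreasing sequence for as long as it proceeds.

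The crucial point for termination is that $\norm{\cdot}$ takes values in the positive integers: since $\len{\sigma_j}=j\ge 1$, every $\len{c_j}$ is a positive integer, hence so is each $\norm{G_i}=\sum_{j=1}^{4g}\len{c_j}$. A strictly decreasing sequence of positive integers is finite, so the procedure halts after finitely many steps at some $G_k$. To identify this halting diagram I would invoke the contrapositive of the preceding theorem: the algorithm can only stop at a CG set admitting no energy-reducing slide, and any such CG set is equivalent to $\mc{S}$. Passing to canonical representatives (say, via left ends) gives $G_k=G_{\mc{S}}$, which establishes the first assertion; \emph{a posteriori} it then follows that $G_{\mc{S}}$ is the global minimizer of the energy among all genus $g$ chord diagrams.

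For the factorization statement I would apply the algorithm to $G_0=G_{\varphi(\mc{S})}$, noting that $\varphi(\mc{S})$ is again a CG set because $\varphi$ fixes the boundary word and hence carries the one-letter-each reduced expression of $\partial\Sigma_{g,1}$ to such an expression for $\varphi(\mc{S})$. The algorithm returns a path of chord slides from $G_{\varphi(\mc{S})}$ down to $G_{\mc{S}}$. Since by Theorem \ref{thm:bene} every chord slide is an invertible morphism of the chord slide groupoid, reversing this path yields a sequence of chord slides from $G_{\mc{S}}$ to $G_{\varphi(\mc{S})}$, i.e.\ a groupoid morphism realizing the action $\mc{S}\mapsto\varphi(\mc{S})$ that characterizes $\varphi$. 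Composing with the map from chord slides to $MC_{g,1}$ then expresses $\varphi$ as an explicit finite product of chord slides.

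The substantive mathematics lives entirely in the preceding theorem, so the points needing care here are bookkeeping. The one I expect to be most delicate is correctly identifying the halting diagram with $G_{\mc{S}}$ rather than with some spurious stationary point: this is exactly where the contrapositive of the preceding theorem is essential, since \emph{a priori} an energy-decreasing slide need not decrease the word length, so termination of a purely length-based descent would not by itself locate $G_{\mc{S}}$. The remaining care is in tracking the orientation of the groupoid path, so that the reversed sequence represents $\varphi$ and not $\varphi^{-1}$.
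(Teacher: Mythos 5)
Your proposal is correct and follows essentially the same route as the paper, which proves Theorem \ref{thm:fmain2} in two sentences by greedily iterating the energy-reducing slide from the preceding theorem and noting that only finitely many chord slides are available at each stage. The details you add --- termination because $\norm{\cdot}$ is a strictly decreasing positive integer, identification of the halting diagram via the contrapositive of the preceding theorem, and reversing the path so that the slide sequence runs from $G_{\mc{S}}$ to $G_{\varphi(\mc{S})}$ and hence represents $\varphi$ rather than $\varphi^{-1}$ --- are all left implicit in the paper and are filled in correctly.
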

\begin{proof}
At every step, the existence of an energy reducing chord slide is provided  by Theorem \ref{thm:fmain2}. That this determines an  algorithm  is guaranteed by the fact that for any linear chord diagram, only finitely many chord slides are possible.  
\end{proof}

\section{Relation to fatgraphs}\label{sect:fatgraphs}

In this final section, we re-articulate  the main result of the previous section into statements about factorizations of mapping classes.  
As the name suggests, the original motivation for the fatgraph Nielsen reduction algorithm comes  from a desire to obtain a factorization into sequences of Whitehead moves on fatgraphs, and we begin this section by briefly outlining how our results on linear chord diagrams transfer to this fatgraph context.  We end the section with a purely mapping class group factorization algorithm where a  mapping class is factored into a product of  certain generators for $MC_{g,1}$.

A \emph{fatgraph} is a finite graph with a cyclic ordering given to the half-edges incident to each vertex.  Given a fatgraph $G$, we can define the boundary cycles of $G$ to be the cyclically ordered sequences of oriented edges where an incoming  edge at a vertex $v$ is followed by the outgoing edge which is next in the cyclic ordering at $v$.  By gluing a disc onto each boundary component of a fatgraph $G$, one obtains a closed orientable surface $\Sigma_G$ and we define the \emph{genus} of the fatgraph to be the genus of the surface $\Sigma_G$.
By a genus $g$ bordered fatgraph, we shall mean a genus $g$ fatgraph with one boundary component with all vertices trivalent except for one which is univalent.  We call the edge incident to the univalent vertex the \emph{tail} $t$ and give it the orientation that it points away from the univalent vertex.  

Fix a genus $g$ surface $\Sigma_{g,1}$ with one boundary component $\partial\Sigma_{g,1}$, and let $q\in \partial\Sigma_{g,1}$ be a point on the boundary distinct from the basepoint $p\in \Sigma_{g,1}$.   By a marking of a genus $g$ bordered fatgraph, we shall mean an isotopy class of embedding of $G$ into $\Sigma_{g,1}$ as a spine such that the image of the end of the tail is the point $q$.  \Poin dual to a marking of a bordered fatgraph $G\hra\Sigma_{g,1}$ is a collection of arcs, which can be chosen to be based at $p$.  In this way, we get a map from the oriented edges of $G$ to $\pi=\pi_1(\Sigma_{g,1},p)$, which we call a $\pi$-marking.  A $\pi$-marking satisfies certain  conditions referred to as  orientation and vertex compatibility, surjectivity, and geometricity, the last of which states that the $\pi$-marking of the tail is the inverse of the element representing the boundary of $\Sigma_{g,1}$,  $\pi(t)=\overline{\partial\Sigma_{g,1}}$. 

We can then define the Ptolemy groupoid as the groupoid with one object for each (isomorphism class of) marked bordered fatgraph and a unique morphism between each pair of objects.  This can be considered as a discrete combinatorial subgroupoid of the fundamental path groupoid of decorated \Teich space given by the 2-skeleten of the dual fatgraph complex for $\Sigma_{g,1}$, but we will not need this perspective here (for more details, see for example \cite{penner, abp}).

By a Whitehead move $W\colon G\ra G'$ on a bordered fatgraph $G$, we shall mean the process of  collapsing  a non-tail edge $e$ of $G$ and expanding the resulting four-valent vertex in the unique opposite way.  It is clear that markings, thus also $\pi$-markings, evolve unambiguously under Whitehead moves.  Note that under obvious conditions, Whitehead moves can be composed.  There are certain sequences of Whitehead moves called the involutivity, commutativity, and pentagon relations  which always take a marked fatgraph to itself.  As a result of decorated \Teich theory or the theory of Strebel differentials that  the Ptolemy groupoid of $\Sigma_{g,1}$ can be equivalently defined as  the groupoid generated by Whitehead moves and with relations given by the involutivity, commutativity, and pentagon relations.  

\subsection{Linear chord diagrams as bordered fatgraphs}

Note that the immersion of a  linear chord diagram $G$ in the plane endows $G$ with the structure of a fatgraph.  Moreover, if $G$ is the linear chord diagram associated to some CG set for $\pi$, then $G$ is a genus $g$ fatgraph with one boundary component.  In fact, we can consider $G$ as a bordered fatgraph by ignoring the bivalent vertex corresponding to the rightmost chord end and ``growing a tail'' on the left:  for example,  by extending the core $[1,4g]\in \bR$ to the  larger interval $[0,4g]$.  
In this way, one can easily show that a marking of a linear chord diagram $G$ (as described in Section \ref{sect:linfatgraphs}) is equivalent to a $\pi$-marking of $G$ as a bordered fatgraph. 

As observed in \cite{abp}, a chord slide on a marked linear chord diagram can be decomposed as a sequence of two Whitehead moves on the corresponding bordered fatgraph.  In fact, Theorem \ref{thm:fmain2} gives an alternate proof of the fact, first proven in  \cite{bene}, which states  that the chordslide groupoid is a subgroupoid of the Ptolemy groupoid. Moreover, there is an algorithm which takes any bordered fatgraph to its ``nearest linear chord diagram'' via the so-called greedy algorithm  \cite{abp}.  Thus, Theorem \ref{thm:fmain2} can be restated as follows:
\begin{theorem}\label{thm:fmain3} 
Given any ``basepoint'' marked bordered fatgraph $G_0\hra\Sigma_{g,1}$, there is an algorithm which determines a sequence of Whitehead moves
\[
G_0\ra G_1\ra \dotsm \ra G_k=\varphi(G_0)
\]
representing a mapping class $\varphi\in MC_{g,1}$ purely from the action of $\varphi$ on $\pi$. 
\end{theorem}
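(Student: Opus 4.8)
The plan is to deduce this theorem from Theorem \ref{thm:fmain2} together with the three facts recalled above: that the greedy algorithm of \cite{abp} connects any bordered fatgraph to a linear chord diagram by a computable sequence of Whitehead moves, that each chord slide decomposes into two Whitehead moves, and that the $MC_{g,1}$-action on marked fatgraphs commutes with Whitehead moves (acting on $\pi$-markings by post-composition with $\varphi$). The idea is to find a Whitehead path realizing $\varphi$ at a convenient \emph{linear} basepoint and then ``conjugate'' it over to the arbitrary basepoint $G_0$ by means of the greedy path.

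First I would run the greedy algorithm on $G_0$ to produce a sequence of Whitehead moves $G_0 \ra \cdots \ra G_{\mc{S}}$, where $G_{\mc{S}}$ is the associated linear chord diagram and $\mc{S}$ is the CG set read off from its $\pi$-marking; I take $\mc{S}$ as the basepoint for the energy function. Next I compute the words $\varphi(s_i) \in \pi$ from the given action of $\varphi$, which determine the CG set $\varphi(\mc{S})$ and hence the linear chord diagram $G_{\varphi(\mc{S})}$. Feeding $G_{\varphi(\mc{S})}$ into Theorem \ref{thm:fmain2} yields an energy-decreasing sequence of chord slides $G_{\varphi(\mc{S})} \ra \cdots \ra G_{\mc{S}}$; reversing it (chord slides are invertible) gives a sequence $G_{\mc{S}} \ra \cdots \ra G_{\varphi(\mc{S})}$ which, as recalled in Section \ref{sect:linfatgraphs}, represents $\varphi$. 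Converting each chord slide into its two constituent Whitehead moves turns this into a Whitehead path $G_{\mc{S}} \ra \cdots \ra G_{\varphi(\mc{S})}$.

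It then remains to return from $G_{\varphi(\mc{S})} = \varphi(G_{\mc{S}})$ to $\varphi(G_0)$. For this I would apply $\varphi$ to the reversed greedy path $G_{\mc{S}} \ra \cdots \ra G_0$: by equivariance this is again a legitimate Whitehead path $\varphi(G_{\mc{S}}) \ra \cdots \ra \varphi(G_0)$, computed simply by post-composing every $\pi$-marking of the greedy path with $\varphi$. Concatenating the greedy path, the Whitehead realization of $\varphi$, and this $\varphi$-image of the reversed greedy path produces a single sequence $G_0 \ra \cdots \ra \varphi(G_0)$ of Whitehead moves representing $\varphi$. Each of the three segments is manufactured purely from the action of $\varphi$ on $\pi$ --- the greedy algorithm sees only the $\pi$-marking, Theorem \ref{thm:fmain2} takes the words $\varphi(s_i)$ as input, and applying $\varphi$ amounts to relabeling by post-composition --- so the whole procedure is algorithmic in the required sense.

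The main obstacle I anticipate is not any single hard estimate but the careful verification of equivariance: that the $MC_{g,1}$-action genuinely commutes with Whitehead moves and carries $\pi$-markings to $\pi$-markings via post-composition with $\varphi$, so that applying $\varphi$ to the reversed greedy path yields a valid Whitehead path terminating exactly at $\varphi(G_0)$. Once this bookkeeping is pinned down, the theorem follows by concatenation, and termination of the procedure is inherited from the corresponding finiteness guarantees in Theorem \ref{thm:fmain2} and in the greedy algorithm.
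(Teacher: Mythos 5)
Your proposal is correct and follows essentially the same route as the paper, which obtains Theorem \ref{thm:fmain3} by combining Theorem \ref{thm:fmain2} with the greedy algorithm of \cite{abp}, the decomposition of each chord slide into two Whitehead moves, and the $MC_{g,1}$-equivariance of the constructions. The explicit conjugation by the greedy path --- including applying $\varphi$ to its reversal so the sequence terminates exactly at $\varphi(G_0)$ --- is precisely the bookkeeping the paper leaves implicit when it presents the theorem as a restatement of Theorem \ref{thm:fmain2}.
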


We note that other algorithms have been previously presented  \cite{mosher,Penner03} which  rely on representing  mapping classes by their action on ideal arcs in $\Sigma_{g,1}$ and resolving intersections of such arcs.

Finally, in \cite{abp}, the so-called chordslide algorithm was discussed and was shown to give an $MC_{g,1}$-equivariant  map from morphisms of the Ptolemy groupoid of $\Sigma_{g,1}$ (i.e., sequences of Whitehead moves) to $MC_{g,1}$.  For any genus $g$, there are only finitely many combinatorial types of Whitehead moves (note that  this shows that $MC_{g,1}$ itself is finitely generated)  and we denote by $\mathfrak{G}_g$ the finite image of all Whitehead moves (i.e., generators of the Ptolemy groupoid) under this map.  
\begin{theorem}
If the elements of a generating set $\mathfrak{S}$ for $MC_{g,1}$ can be explicitly written as products  of elements of $\mathfrak{G}_g$, 
then  Theorem \ref{thm:fmain3} provides an algorithm to decompose any mapping class  $\varphi\in MC_{g,1}$ as a product of elements of $\mathfrak{S}$, purely from the action of $\varphi$ on $\pi$.
\end{theorem}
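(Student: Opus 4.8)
The plan is to chain the factorization produced by Theorem~\ref{thm:fmain3} with the chordslide map of \cite{abp}, obtaining $\varphi$ as an explicit word in $\mathfrak{G}_g$, and then to convert this word into one in $\mathfrak{S}$ by a terminating search justified by the finiteness of $\mathfrak{G}_g$ together with the faithfulness of the $MC_{g,1}$-action on $\pi$.

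First, given $\varphi$ by its action on $\pi$, I would apply Theorem~\ref{thm:fmain3} at the fixed basepoint fatgraph to obtain an explicit sequence of Whitehead moves $G_0\ra G_1\ra\dotsm\ra G_k=\varphi(G_0)$. Next, feed this sequence into the $MC_{g,1}$-equivariant chordslide map of \cite{abp}. Since that map is a functor to the group $MC_{g,1}$ carrying each individual Whitehead move to an element of the finite set $\mathfrak{G}_g$, the sequence is sent to a product $\varphi=h_1 h_2\dotsm h_k$ with each $h_i\in\mathfrak{G}_g$. This already expresses $\varphi$ explicitly as a word in the generating set $\mathfrak{G}_g$, using only the action of $\varphi$ on $\pi$.

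It remains to rewrite this word in terms of $\mathfrak{S}$, which is the only genuinely new point. The hypothesis supplies the dependence in the ``wrong'' direction: each $s\in\mathfrak{S}$ is given as an explicit product of elements of $\mathfrak{G}_g$, whereas we must invert this on the finite set $\mathfrak{G}_g$. For each of the finitely many $g\in\mathfrak{G}_g$ I would precompute a word $w_g$ in $\mathfrak{S}^{\pm1}$ with $w_g=g$, as follows. Each element of $\mathfrak{G}_g$ arises from a Whitehead move and so has a computable action on $\pi$; hence, by substituting the given $\mathfrak{G}_g$-expressions, the action on $\pi$ of every generator $s\in\mathfrak{S}$, and thus of any word in $\mathfrak{S}^{\pm1}$, is computable. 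Because $\mathfrak{S}$ generates $MC_{g,1}$ and, by the classical Dehn--Nielsen embedding $MC_{g,1}\hookrightarrow\Aut(\pi)$, a mapping class is determined by its action on $\pi$, a systematic enumeration of words in $\mathfrak{S}^{\pm1}$ that compares the induced automorphism of $\pi$ against that of $g$ at each step is guaranteed to halt with a representative $w_g$. Substituting $w_{h_i}$ for each $h_i$ in $\varphi=h_1\dotsm h_k$ then yields $\varphi$ as an explicit word in $\mathfrak{S}$.

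The main obstacle is exactly this inversion: the hypothesis only writes $\mathfrak{S}$ in terms of $\mathfrak{G}_g$, while the algorithm naturally produces words in $\mathfrak{G}_g$. What makes the reverse direction \emph{effective}, rather than merely existent, is the confluence of three facts: $\mathfrak{G}_g$ is finite, so only finitely many conversions are needed and they may be precomputed once; $\mathfrak{S}$ generates $MC_{g,1}$, so each conversion exists; and equality of mapping classes is decidable through the faithful action on $\pi$, so the naive enumeration terminates. No efficiency claim is made—the content of the theorem is solely the existence of a terminating procedure depending on $\varphi$ only through its action on $\pi$.
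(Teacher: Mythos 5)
Your proposal is correct, and it is worth noting at the outset that the paper states this theorem with no proof whatsoever, presenting it as an immediate consequence of Theorem \ref{thm:fmain3} and the chordslide map of \cite{abp}; so your write-up supplies an argument the paper only gestures at. Your first two steps are exactly the implicit content: fatgraph Nielsen reduction produces the Whitehead-move sequence $G_0\ra G_1\ra\dotsm\ra G_k=\varphi(G_0)$, and the $MC_{g,1}$-equivariant map of \cite{abp} sends it to a product $\varphi=h_1\dotsm h_k$ with each $h_i\in\mathfrak{G}_g$. Where you go beyond the paper is in taking the hypothesis seriously as written: it expresses $\mathfrak{S}$ in terms of $\mathfrak{G}_g$, which is indeed the wrong direction for direct substitution, and the paper never says how to invert it. Your precomputation — for each of the finitely many $h\in\mathfrak{G}_g$, enumerate words in $\mathfrak{S}^{\pm1}$ by length, compute their induced automorphisms of $\pi$ (computable since the action of each element of $\mathfrak{G}_g$, hence by hypothesis of each $s\in\mathfrak{S}$, is explicit, and equality in $\Aut(\pi)$ is decidable by comparing reduced images of a free basis), and halt by generation together with the Dehn--Nielsen embedding $MC_{g,1}\hra\Aut(\pi)$ — is a correct, terminating repair, and it cleanly isolates what each hypothesis buys: finiteness of $\mathfrak{G}_g$ makes the conversion a one-time finite precomputation, generation guarantees existence of each $w_h$, and faithfulness guarantees verifiability. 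Only two cosmetic caveats: the composition order $h_1\dotsm h_k$ versus $h_k\dotsm h_1$ depends on the conventions of \cite{abp} and is immaterial, and ``product of elements of $\mathfrak{S}$'' should be read as a word in $\mathfrak{S}^{\pm1}$, as is standard for generating sets; neither affects correctness.
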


\bibliographystyle{amsplain}

\begin{thebibliography}{999}

\bibitem{abp}
J. Andersen, A. Bene, R. Penner, 
{\it Groupoid lifts of mapping class representations for bordered surfaces},
preprint, arXiv: 0710.2651

\bibitem{bene}
A. Bene {\it A chord diagrammatic presentation of the mapping class group of a once bordered surface}, to appear in Geom. Ded.  preprint, arXiv: 0802.2747.

\bibitem{bkp}
A. Bene, N. Kawazumi, R.C. Penner,
{\it Canonical extensions of the Johnson homomorphisms to the Torelli groupoid},
Adv. Math. 221 (2009), 627--659.

\bibitem{harer}
J. Harer, 
{\it The virtual cohomological dimension of the mapping
class group of an orientable surface},
Invent. Math. 84
(1986), 157--176.

\bibitem{moritapenner}
S. Morita, R.C. Penner
{\it Torelli groups, extended Johnson homomorphisms, and new cycles on the moduli space of curves},
 Math. Proc. Camb. Phil. Soc. 144  (2008),  no. 3, 651--671.

\bibitem{mosher}
L. Mosher, {\it Mapping class groups are automatic}
The Annals of Mathematics,  142 (1995),  303--384.

\bibitem{penner}
R. Penner,
{\it The decorated Teichm\"uller space of punctured surfaces},
Comm. Math. Phys. 113
(1987), 299--339.

\bibitem{Penner03}
---,
{\it  Universal constructions in \Teich Theory}, Adv. Math. 98 (1993), 143--215.
\bibitem{Penner04}
---,
{\it Decorated Teichm\"uller theory of bordered surfaces},
Comm. Anal. Geom.   12 
(2004), 793--820.

\bibitem{strebel}
K. Strebell, \emph{Quadratic Differentials},
 Springer-Verlag, Berlin, 1984
 
\bibitem{Zieschang}
H. Zieschang,
{\it Surface and Planar Discontinuous Groups},
Lecture Notes in Mathematics 835,
Springer-Verlag 1980.


\end{thebibliography}

\end{document}